\numberwithin{equation}{section}
\theoremstyle{plain} 
\newtheorem{thm}{Theorem}[section]
\newtheorem{seur}[thm]{Corollary}
\newtheorem{lem}[thm]{Lemma}
\theoremstyle{remark}
\newtheorem{exa}[thm]{Example}
\newtheorem{rem}[thm]{Remark}
\newcommand{\X}{\textbf{X}}
\newcommand{\Sum}{\textbf{S}}
\newcommand{\Thetav}{\boldsymbol\Theta}
\newcommand{\Uv}{\textbf{U}}
\newcommand{\x}{\textbf{x}}
\newcommand{\y}{\textbf{y}}
\newcommand{\vv}{\textbf{v}}
\newcommand{\p}{\textbf{p}}
\newcommand{\e}{\textbf{e}}
\newcommand{\nolla}{\textbf{0}}
\newcommand{\RR}{\mathbb R}
\newcommand{\Ss}{\mathbb S}
\newcommand{\PP}{\mathbb P}
\newcommand{\EE}{\mathbb E}
\newcommand{\ind}{\mathds{1}}
\newcommand{\ud}{\mathrm{d}}
\title{Large deviations for a class of multivariate heavy-tailed risk processes used in insurance and finance}
\author{Miriam Hägele, Jaakko Lehtomaa}
\begin{document}
\maketitle

\subsection*{Abstract}
Modern risk modelling approaches deal with vectors of multiple components. The components could be, for example, returns of financial instruments or losses within an insurance portfolio concerning different lines of business. One of the main problems is to decide if there is any type of dependence between the components of the vector and, if so, what type of dependence structure should be used for accurate modelling. 

We study a class of heavy-tailed multivariate random vectors under a non-parametric shape constraint on the tail decay rate. This class contains, for instance, elliptical distributions whose tail is in the intermediate heavy-tailed regime, which includes Weibull and lognormal type tails. The study derives asymptotic approximations for tail events of random walks. Consequently, a full large deviations principle is obtained under, essentially, minimal assumptions. As an application, an optimisation method for a large class of Quota Share (QS) risk sharing schemes used in insurance and finance is obtained.

\subsection*{Keywords}
Large deviations, subexponential distribution, multivariate random walk, elliptical distribution

\subsection*{Classification}
60G50, 91B30

\section{Introduction and assumptions}
\subsection{Introduction}
Applications in finance and insurance require multivariate models with heavy-tailed distributions to accurately describe multivariate risks. This includes understanding the possible dependence types of large observations. The case where such observations are restricted to a subset, say orthant, of the $d$-dimensional space $\mathbb{R}^d$ is studied in the setting of multivariate regular variation in \cite{Lehtomaa3}. Many studies on multivariate heavy-tailed distributions are built on the assumption of extremely heavy tails assuming  e.g.\ regular variation \cite{Hult3, Hult1, Mikosch1, Nyrhinen1}. In this paper, we concentrate on the less studied situation where the large observations can be found from any direction and where the tails are not as heavy as regularly varying tails. Such situations appear naturally in the case of financial returns of portfolios since the tails are often observed to have a lognormal type distribution \cite{hardy2001regime,jrfm11030052,Tegner} and the observations can be present in all orthants \cite{Lehtomaa3}.

We study asymptotic approximations of random walks, i.e.\ multivariate processes $(\Sum_n):=(\Sum_n)_{n=1}^\infty$ in $\mathbb{R}^d$ where $$\Sum_n= \X_1+\dots+\X_n$$
and the increments $\X,\X_1,\X_2,\ldots$ are independent and identically distributed (i.i.d.)\ random vectors. The class of studied increments is closely related to the class of multivariate subexponential vectors. Our class concerns lighter than polynomial tails where the variables have finite moments of all orders.
There exist at least three different approaches in the literature to define multivariate subexponentiality. The definitions in \cite{Cline1,Omey1} require, in addition to subexponentiality of the marginal distributions, a multivariate version of long-tailedness. The approach in \cite{Samorodnitsky} uses an alternative definition via fixed ruin sets in order to define a one-dimensional distribution function with respect to each set. The distribution class considered in this paper is consistent with the definition of \cite{Samorodnitsky}. For the one-dimensional case, \cite{Denisov1} provides an overview of large deviations results for subexponential distributions.

We write $\X$ in product form as
$$\X=R\Uv.$$ 
The one-dimensional radius variable $R$ controls the heaviness of the increments and $\Uv$ indicates which directions (defined by unit vectors) are possible. Variable $R$ can have, for example, Weibull or Lognormal type distribution. This definition can be extended to include the class of elliptical distributions, which frequently appear in the literature in applications in finance, see, for instance, \cite{Hult5,Kluppelberg2}. Notably, the tail decay speed of $R$ is not restricted to a narrowly defined parametric class. 

The proof methods are based on earlier results concerning one-dimensional random walks such as the ones presented in \cite{Lehtomaa2}. A full large deviations principle with non-trivial rate function under, essentially, minimal assumptions on the distribution is also derived. This study complements \cite{Mikosch3} which considers lognormal distributions and the result presented in \cite{Bazhba1} which focuses on Weibull distributions in the one-dimensional setting. As an application, we get an optimisation method for Quota Share (QS) risk sharing schemes, which are widely used in the field of reinsurance. In a QS-contract, there are two participants called the ceding company and the reinsurance company. They agree to share a random risk $Y$ so that one pays $qY$ and the other pays $(1-q)Y$. Our aim is to optimise the portions $q$ when a company buys reinsurance for all lines of business, i.e.\ each component of $\X$ is shared with a reinsurance company. The optimisation is obtained from the viewpoint of both the ceding and the reinsurance company.

\subsection{Notation}
We denote vectors by bold symbols and their components by upper indices, e.g.\ for $\x\in \mathbb{R}^d$ we write $\x=(x^1,\dots,x^d)^T$. The inner product of the vectors $\x$ and $\y$ is denoted by $\langle \x,\y\rangle=\sum_{j=1}^d x^jy^j$ and $\|\x\|_2$ is the $L_2$-norm. Here, $\|\x\|_2$ is called the length of $\x$ and $\x/\|\x\|_2$ the direction of $\x$. 
$\Ss^{d-1}$ is the $d$-dimensional unit sphere, the subset of $\RR^d$ including all vectors with $L_2$-norm equal to one.
The notation $B^{\circ}$ stands for the interior of the set $B$, $\overline{B}$ for its closure and $B^c$ for its complement.
For $r\in\RR_+$ and $S\subset \Ss^{d-1}$, we set 
\begin{equation} \label{def_vrs}
V_{r,S}:=\left\{\x\in\RR^d:\|\x\|_2>r,\frac{\x}{\|\x\|_2}\in S\right\},
\end{equation}
where the expression $A:=B$ means $A$ is defined by $B$.
$B(\x,a)$ defines a ball centred at $\x$ with radius $a$ and we denote the inverse function of $f$ by $f^{-1}$. 

The asymptotic relation $f(x)\sim g(x)$, as $x\to\infty$ means $\lim_{x\to\infty} f(x)/g(x)=1$ and the little-o notation $g(x)=o(f(x))$ means $\lim_{x\to\infty} g(x)/f(x)=0$. We take the limit $x\to\infty$ or $n\to\infty$, where $x$ denotes real and $n$ natural numbers. The symbol $\ind(C)$ denotes the indicator function of the event $C$, $\PP(C)$ its probability and $\EE(X)$ stands for the expectation of $X$. By $A$ we denote a symmetric $d\times d$ matrix and $\Omega\subset \RR^d$ is the ellipsoid generated by the linear transformation $\Lambda:\RR^d\to\RR^d, \Lambda(\x)=A\x$ of the unit sphere, $\Omega=\Lambda(\Ss^{d-1})$.

\subsection{Model assumptions} \label{sec_as}
The aim is to derive a large deviations principle for elliptical multivariate distributions with moderate heavy tails. Therefore, we study the asymptotic behaviour of the random walk $(\Sum_n)$, where
\begin{displaymath}
\Sum_n = \X_1+\dots+\X_n,
\end{displaymath}
and $\X,\X_1,\X_2,\dots$ are i.i.d.\ increments. Here, $\X$ is the product of a heavy-tailed random variable $R$ and a random vector $\Uv$ or $\Thetav$ similarly to the setting in \cite{Hagele1}. We assume that $\Uv$ is distributed on the $d$-dimensional unit sphere $\Ss^{d-1}$ and that $\Thetav$ is distributed on a $d$-dimensional ellipsoid $\Omega$. 

We make the following, essentially minimal, assumptions on $R,\Uv$ and $\Thetav$.
\begin{enumerate}[label=(A\arabic*)]
\item \label{as_R}
The tail function of the random variable $R$ satisfies 
\begin{equation}\label{eq_as_eq1}
-\log(\PP(R>x)) \sim h(x),
\end{equation}
as $x\to\infty$, where $h(x)$ is an increasing and concave function such that 
\begin{enumerate}[label=(\roman*)]
    \item $h(x)=o(x)$ and
    \item $\log(x)=o(h(x))$, as $x\to\infty$.
\end{enumerate}  

\item \label{as_U}
The random vector $\Uv\in \Ss^{d-1}$ has a distribution on the $d$-dimensional unit sphere $\Ss^{d-1}$. Let $S\subset \Ss^{d-1}$ be a subset with positive Lebesgue measure. We assume that $\PP(\Uv\in S)>0$. In addition, $\Uv$ is assumed to be asymptotically independent of the random variable $R$ in the sense that
\begin{displaymath}
\lim_{x\to\infty} \PP(\Uv\in S | R>x) = \PP(\Uv\in S), 
\end{displaymath}
and $\EE(R\Uv)=\nolla$.
\end{enumerate}

We can then define the random vector $\Thetav$ by a linear transformation from the unit sphere $\Ss^{d-1}$ to the $d$-dimensional ellipsoid $\Omega$ centred at the origin. The linear transformation $\Lambda:\RR^d\to\RR^d$ with $\Lambda(\x)=A\x$, where $A$ is a symmetric, positive definite matrix generates the ellipsoid $\Omega=\Lambda(\Ss^{d-1})$. The random vector $\Thetav$ can then be written as a transformed vector, $\Thetav=\Lambda(\Uv)$. If $A$ is a diagonal matrix, the ellipsoid is orientated along the axes. 

Instead of defining $\Thetav$ through the linear transformation $\Lambda$, we can write its definition in a similar way as for the random vector $\Uv$.

\begin{enumerate}[label=(A2')]
    \item \label{as_thetav}
The $d$-dimensional random vector $\Thetav$ is distributed on an ellipse or ellipsoid $\Omega$ centred at the origin with $\EE(R\Thetav)=\nolla$. It holds for every set $S\subset \Omega$ with positive Lebesgue measure that $\PP(\Thetav\in S)>0$ and $\Thetav$ is asymptotically independent of the random variable $R$ in the sense that $\lim_{x\to\infty} \PP(\Thetav\in S | R>x) =  \PP(\Thetav\in S)$.
\end{enumerate}

\begin{rem}
 Assumption \ref{as_R} implies that the random variable $R$ is heavy-tailed in the sense that $\EE(e^{sR})=\infty$ for all $s>0$. The fact that $\log(x)=o(h(x))$ implies $\EE(R^s)<\infty$ for all $s>0$ so the random variable $R$ has finite moments of all orders. Furthermore, it follows from assumptions \ref{as_U} and \ref{as_thetav} that the support of the random vector $\Uv$ or $\Thetav$ is the entire set $\Ss^{d-1}$ or $\Omega$.
\end{rem}

Assumption \ref{as_R} is closely related to the class of subexponential distributions that is introduced, for instance, in \cite{Embrechts1, Foss1}.

\begin{lem}
If Assumption \ref{as_R} holds with \eqref{eq_as_eq1} as an equality for large enough arguments, the distribution of $R$ belongs to the class of subexponential distributions. 
\end{lem}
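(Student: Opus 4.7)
The plan is to verify the defining subexponential relation $\PP(R_1 + R_2 > x) \sim 2\,\PP(R > x)$ for two i.i.d.\ copies of $R$, following the classical two-step route: establish long-tailedness, bound the convolution diagonal, and then assemble via the standard splitting.

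\textbf{Long-tailedness.} Under Assumption \ref{as_R} with equality, $\PP(R>x) = e^{-h(x)}$ for large $x$. Concavity together with $h(x) = o(x)$ makes $h$ differentiable almost everywhere with a non-increasing right-derivative $h'$ satisfying $h'(x) \to 0$; indeed, the tangent inequality gives $h(0) \leq h(x) - x h'(x)$, whence $h'(x) \leq (h(x) - h(0))/x \to 0$. For any fixed $y > 0$, then, $h(x+y) - h(x) \leq y\, h'(x) \to 0$, so $\PP(R > x+y)/\PP(R > x) = e^{h(x)-h(x+y)} \to 1$, proving that the distribution of $R$ is long-tailed.

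\textbf{Negligibility of the diagonal.} I would next show $\PP(R > x/2)^2 = o(\PP(R>x))$, equivalently $g(x) := 2h(x/2) - h(x) \to \infty$. Since $h'$ is non-increasing, $g'(x) = h'(x/2) - h'(x) \geq 0$, so $g$ is non-decreasing and admits a limit $L \in (-\infty,\infty]$. Assuming $L$ finite and iterating $h(2y) \geq 2h(y) - L$ gives $h(2^n y) \geq 2^n(h(y) - L) + L$ for all $n$ and $y$ large; but $h(x) = o(x)$ forces $h(2^n y)/2^n \to 0$, while $\log x = o(h(x))$ forces $h(y) \to \infty$, contradicting the iteration. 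Hence $g(x) \to \infty$.

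\textbf{Conclusion and main obstacle.} Decompose
\begin{equation*}
\PP(R_1+R_2 > x) = 2\int_{[0,x/2]} \PP(R > x - y)\,F_R(\ud y) + \PP(R > x/2)^2;
\end{equation*}
it remains to show the integral is asymptotic to $\PP(R > x)$. This is the genuine difficulty: the pointwise ratio $\PP(R>x-y)/\PP(R>x)\to 1$ from long-tailedness has no integrable majorant on the unbounded window $[0,x/2]$, so dominated convergence is unavailable. The standard remedy is to split at a slowly growing $\gamma(x)\to\infty$ chosen so that $\gamma(x)h'(x/2)\to 0$: on $[0,\gamma(x)]$ long-tailedness holds uniformly and contributes $\PP(R>x)(1+o(1))$, while on $[\gamma(x),x/2]$ the same concavity-iteration argument as above, applied to the function $h(x-y)+h(y)-h(x)$, shows that this quantity diverges uniformly in $y$, rendering the contribution $o(\PP(R>x))$. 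A more economical route, avoiding the intermediate regime altogether, is to invoke Pitman's classical sufficient criterion for subexponentiality of distributions with eventually decreasing hazard rate, whose hypotheses are met verbatim once concavity of $h$ is translated into the regularity of the density $h'(x)e^{-h(x)}$ of $R$.
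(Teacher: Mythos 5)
Your route is genuinely different from the paper's. The paper simply invokes Theorem~2 of Teugels, a ready-made sufficient criterion for subexponentiality: it reduces the whole problem to exhibiting one auxiliary function $g(x)\to\infty$ with $x-g(x)\to\infty$ and $\PP(R>x-g(x))/\PP(R>x)\to 1$, which the paper checks with $g(x)=(x/h(x))^{1/2}$ in a two-line concavity computation. You instead attempt a first-principles verification of $\PP(R_1+R_2>x)\sim 2\,\PP(R>x)$ via the classical three-part splitting. Your first two steps are sound: the tangent-line argument that $h'(x)\le (h(x)-h(0))/x\to 0$ correctly gives long-tailedness, and the iteration $h(2^n y)\ge 2^n(h(y)-L)+L$ cleanly forces $2h(x/2)-h(x)\to\infty$ from $h(x)=o(x)$ and $h(x)\to\infty$.

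The gap is in the third step, and you have (honestly) flagged it but not closed it. The assertion that ``the same concavity-iteration argument as above, applied to $h(x-y)+h(y)-h(x)$, shows that this quantity diverges uniformly in $y$'' is not a matter of rerunning the doubling iteration: what you actually need is that $y\mapsto h(x-y)+h(y)-h(x)$ is nondecreasing on $[0,x/2]$ (its derivative $h'(y)-h'(x-y)\ge 0$), so the infimum over $[\gamma(x),x/2]$ sits at $y=\gamma(x)$, and then a bound of the form $h(\gamma(x))-\gamma(x)h'(x-\gamma(x))\to\infty$, which in turn requires verifying that the ``intercept'' $\psi(y):=h(y)-yh'(y)$ tends to infinity (true, but it needs its own argument using $\log x=o(h(x))$). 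Even after that, bounding the exponent pointwise does not finish, because the window $[\gamma(x),x/2]$ carries $O(1)$ mass; you must actually estimate the integral, which leads back to convergence of $\int h'(y)e^{-\psi(y)}\,\ud y$. This is exactly the Pitman integral in your alternate route, and calling its hypotheses ``met verbatim'' overstates the situation: the monotone hazard-rate condition is fine, but finiteness of $\int e^{yq(y)}\,\ud F(y)$ is a genuine condition that must be derived from \ref{as_R}, not read off. So the architecture of your proof is viable and more self-contained than the paper's, but it trades the paper's one concavity estimate for a harder analytic lemma that you leave unproved.
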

\begin{proof}
The statement follows directly from Theorem 2 of \cite{Teugels} which gives a sufficient condition for subexponentiality of a distribution based on tail functions. The condition has three requirements two of which are immediately true by our definition. To check the remaining condition, we can define an auxiliary function $g(x):=(x/h(x))^{1/2}$. Then $g(x)\to \infty$ and $x-g(x)\to\infty$, as $x\to\infty$. Without loss of generality, we can assume $h(0)\geq 0$, see Remark \ref{rem_hpilkku}. Due to the concavity, it holds that
\begin{eqnarray*}
&&\lim_{x\to\infty}\frac{\PP(R>x-g(x))}{\PP(R>x)}= \lim_{x\to\infty}\exp\left(-h\left(\left(1-\frac{g(x)}{x}\right)x\right)+h(x)\right)\\
&\geq& \lim_{x\to\infty}\exp\left(-\left(1-\frac{g(x)}{x}\right)h(x)+h(x)\right)
= \lim_{x\to\infty}\exp\left(\frac{g(x)}{x}h(x)\right)\\
&=&  \lim_{x\to\infty}\exp(g(x)^{-1})=1
\end{eqnarray*}
The corresponding upper bound of the limit is immediately valid by definition. 
\end{proof}

\begin{exa}
Simple examples of distributions of $R$ that fulfil Assumption \ref{as_R} include Weibull distributions with parameter $\beta\in (0,1)$ and lognormal type distributions which are defined by the relation $\PP(R>x) \sim e^{-(\log(x))^p}$ for $x>x_0$, where $p> 1$.
\end{exa}

Assumption \ref{as_R} can be used to obtain bounds even if it does not hold immediately for a given tail function $\PP(R>x)$. For example, if $R$ can be stochastically bounded by, say, $R'$ and $R''$ in the sense that 
$$\PP(R'>x)\leq \PP(R>x) \leq \PP(R''>x)$$
and the variables $R'$ and $R''$ satisfy \ref{as_R} (possibly with different concave functions), a result can be obtained if the asymptotic behaviour concerning the upper and lower bounds coincides in a suitable sense. For a concrete example of this, recall that a random variable $R$ belongs to the class of stretched exponential distributions if, for large enough $x$, inequalities
\begin{displaymath}
l_1(x)e^{-l(x)x^\beta}\leq \PP(R>x) \leq l_2(x)e^{-l(x)x^\beta}
\end{displaymath}
hold, where $\beta\in (0,1)$ and $l,l_1,l_2$ are slowly varying functions. This class is studied in particular in \cite{Gantert2,Gantert1}. Here, Assumption \ref{as_R} is valid if $l(x)x^\beta$ is a concave function for large enough $x$. If it is not, we can still find, based on Theorem 1 of \cite{Lehtomaa2}, a function $\underline{h}(x)$ which satisfies Assumption \ref{as_R} and inequality $\PP(R>x)\leq e^{-\underline{h}(x)}$ for large enough $x$ and
$$\liminf_{x\to \infty} \frac{-\log \PP(R>x)}{\underline{h}(x)}=1. $$
This fact can be used in the proofs by replacing $\PP(R>x)$ by $e^{-\underline{h}(x)}$ in suitable places in order to obtain results also for the stretched exponential class.

\section{Asymptotics of spherical distributions}\label{sec_sph}
\subsection{Large deviations principle}
Throughout this section, we study the random walk $(\Sum_n)$ generated by random vectors of the form $\X=R\Uv$, where the random variable $R$ fulfils Assumption \ref{as_R} and the random vector $\Uv$ fulfils Assumption \ref{as_U}.
We examine the probability of the asymptotic event that the random walk exceeds a threshold in a selected norm in order to prove a large deviations theorem. In this study, we choose to use the $L_2$-norm because it is, in our view, a natural choice when dealing with ellipses.

We start by considering a spherical distribution. The result is later extended to the setting of asymptotically elliptical heavy-tailed distributions. 
The proofs of the theorems stated below can be found in Subsection \ref{sec_proofsph}. The first result concerns logarithmic asymptotics of the norm of the random walk. 

\begin{thm} \label{thm_ball}
Let $a>0$ be a fixed number. Suppose the increment of the random walk $(\Sum_n)$ is of the form $\X=R\Uv$, where assumptions \ref{as_R} and \ref{as_U} hold. Then,
\begin{displaymath}
\lim_{n\to\infty} \frac{\log(\PP(\|\Sum_n\|_2>na))}{h(na)} = -1.
\end{displaymath}
\end{thm}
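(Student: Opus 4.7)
The plan is to prove matching lower and upper bounds of $-1$ for the ratio $\log\PP(\|\Sum_n\|_2>na)/h(na)$. The lower bound follows from the classical ``single big jump'' heuristic, while the upper bound reduces to a one-dimensional logarithmic large deviations result along the lines of \cite{Lehtomaa2} via discretisation of the sphere of directions.

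For the lower bound, I would observe that on the event $\{R_1>(1+\epsilon)na\}\cap\{\|\sum_{i=2}^n\X_i\|_2\le\epsilon na\}$ the reverse triangle inequality yields $\|\Sum_n\|_2\ge\|\X_1\|_2-\|\sum_{i=2}^n\X_i\|_2=R_1-\|\sum_{i=2}^n\X_i\|_2>na$. Since $R_1$ is a function of $\X_1$ alone, the two events are independent. Assumption \ref{as_R} together with the concavity inequality $h(cx)\le c\,h(x)$ for $c\ge 1$ (which follows from concavity and $h(0)\ge 0$) gives
$$\log\PP(R_1>(1+\epsilon)na)\ge-(1+\epsilon)h(na)(1+o(1)),$$
while the strong law of large numbers for the centred i.i.d.\ vectors $\X_i$ with $\EE(R\Uv)=\nolla$ shows that $\PP(\|\sum_{i=2}^n\X_i\|_2\le\epsilon na)\to 1$. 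Dividing by $h(na)$ and letting $\epsilon\downarrow 0$ yields $\liminf\ge-1$.

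For the upper bound, I would fix $\epsilon>0$ and a finite $\epsilon$-net $N_\epsilon\subset\Ss^{d-1}$ whose cardinality depends only on $d$ and $\epsilon$. Since $\|\Sum_n\|_2=\sup_{\vv\in\Ss^{d-1}}\langle\Sum_n,\vv\rangle$, approximating the optimal direction by its nearest element of $N_\epsilon$ gives
$$\PP(\|\Sum_n\|_2>na)\le|N_\epsilon|\max_{\vv\in N_\epsilon}\PP(\langle\Sum_n,\vv\rangle>(1-\epsilon)na).$$
For each $\vv\in N_\epsilon$ the projected increments $Y_i:=\langle\X_i,\vv\rangle=R_i\langle\Uv_i,\vv\rangle$ are i.i.d., centred, and satisfy the sandwich $\PP(R>y/\xi)\PP(\langle\Uv,\vv\rangle>\xi)(1+o(1))\le\PP(Y_i>y)\le\PP(R>y)$; the lower bound uses that $\{\x\in\Ss^{d-1}:\langle\x,\vv\rangle>\xi\}$ is a spherical cap of positive Lebesgue measure (and hence has positive probability by \ref{as_U}) together with the asymptotic independence of $R$ and $\Uv$. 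Letting $\xi\uparrow 1$ and applying the concavity bound $h(y/\xi)\le h(y)/\xi$ shows $-\log\PP(Y_i>y)\sim h(y)$. The one-dimensional logarithmic large deviations principle from \cite{Lehtomaa2}, applied to $\sum_i Y_i$, then gives
$$\log\PP\!\left(\sum_{i=1}^n Y_i>(1-\epsilon)na\right)\le-h((1-\epsilon)na)(1+o(1))\le-(1-\epsilon)h(na)(1+o(1)).$$
Since $\log|N_\epsilon|=O(1)=o(h(na))$ by \ref{as_R}(ii), this yields $\limsup\le-(1-\epsilon)$, and $\epsilon\downarrow 0$ closes the argument.

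The main obstacle is the one-dimensional reduction: the projected variables $Y_i$ must be shown to fit the hypotheses of the one-dimensional theorem in \cite{Lehtomaa2}, which depends crucially on the asymptotic independence condition \ref{as_U} in order to transfer the concave tail profile $h$ of $R$ to each projection without logarithmic distortion. A secondary concern is ensuring that the multiplicative factors $|N_\epsilon|$ and $\PP(\langle\Uv,\vv\rangle>\xi)$ are absorbed at the $h(na)$ scale, which relies precisely on the growth gap $\log x=o(h(x))$ in \ref{as_R}(ii).
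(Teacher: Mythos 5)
Your proof is correct and runs along essentially the same lines as the paper's. The upper bound is the same idea: cover $\{\|\x\|_2>na\}$ by finitely many projection half-spaces (your $\epsilon$-net of $\Ss^{d-1}$ plays exactly the role of the paper's vectors $\vv_1,\dots,\vv_m$), reduce to one-dimensional projected walks, absorb the finite cardinality at scale $h(na)$, and let the covering become tight. The sandwich estimate you derive for $\PP(\langle\X,\vv\rangle>y)$ is precisely the content of the paper's Lemma~\ref{lem_projh}, and the one-dimensional logarithmic bound you cite from \cite{Lehtomaa2} is what the paper re-derives internally as Lemmas~\ref{lem_remterm} and~\ref{lem_projbigjump}; you treat it as a black box, which is legitimate once you have verified (as you do) that the projected increments $Y_i=\langle\X_i,\vv\rangle$ are centred with all moments finite and $-\log\PP(Y>y)\sim h(y)$. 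The one genuine (if minor) difference is your lower bound: you apply the single-big-jump decomposition directly to the multivariate sum via the reverse triangle inequality on the event $\{R_1>(1+\epsilon)na\}\cap\{\|\sum_{i\ge 2}\X_i\|_2\le\epsilon na\}$, whereas the paper first projects onto a fixed direction ($\PP(\|\Sum_n\|_2>na)\ge\PP(p_\vv(\Sum_n)>na)$) and then invokes the one-dimensional lower bound of Lemma~\ref{lem_projbigjump}, which itself is a single-big-jump argument. Your route is slightly more direct and avoids Lemma~\ref{lem_projbigjump} entirely on the lower-bound side; it buys nothing deeper, but it does isolate the lower bound from the projection machinery and makes clear that only the weak law of large numbers and the tail of $R$ are needed there. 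Your concavity manipulations ($h(cx)\le ch(x)$ for $c\ge 1$, $h(cx)\ge ch(x)$ for $c\le 1$, both using $h(0)\ge 0$ via Remark~\ref{rem_hpilkku}) are all correct.
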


The asymptotic relation derived in Theorem \ref{thm_ball} yields a full large deviations principle with non-trivial rate function for asymptotically spherical heavy-tailed distributions under an additional technical assumption. 

\begin{thm} \label{thm_ldp}
Let $\X=R\Uv$ where $R$ and $\Uv$ fulfil assumptions \ref{as_R} and \ref{as_U}. Additionally, assume that, for $a>0$, the limit
\begin{equation}\label{eq_limh}
\lim_{x\to\infty} \frac{h(ax)}{h(x)}
\end{equation}
exists.
Then, the process $\{\Sum_n/n\}$ satisfies the large deviations principle with rate function 
\begin{displaymath}
I(\x) = \left\{\begin{array}{ll}\lim_{n\to\infty}\frac{h(n\|\x\|_2)}{h(n)}, & \textrm{if }\x\neq \nolla\\
0,& \textrm{if }\x=\nolla
\end{array}\right.
\end{displaymath}
and scale $h$, i.e.\
\begin{align*}
-\inf_{\y\in B^\circ} I(\y) &\leq \liminf_{n\to\infty} \frac{\log(\PP(\Sum_n/n\in B))}{h(n)}\\
&\leq \limsup_{n\to\infty} \frac{\log(\PP(\Sum_n/n\in B))}{h(n)} \leq -\inf_{\y\in \overline{B}} I(\y)
\end{align*}
for all Borel sets $B\subset \RR^d$.
\end{thm}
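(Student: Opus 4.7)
The plan is to establish the LDP upper and lower bounds separately, using Theorem~\ref{thm_ball} radially for the former and a one-big-jump decomposition combined with asymptotic independence (Assumption~\ref{as_U}) for the latter; throughout it suffices to take $B$ closed for the upper bound and open for the lower bound. The starting point is that the existence of $c(a):=\lim_{x\to\infty}h(ax)/h(x)$ for every $a>0$ forces $c(ab)=c(a)c(b)$ and $c$ is nondecreasing (since $h$ is), so by the standard characterisation of regularly varying functions $c(a)=a^\rho$ for a single $\rho\in[0,1]$; here $\rho\le 1$ comes from concavity together with $h(x)=o(x)$ (which give $h(ax)\le a\,h(x)$ for $a\ge 1$). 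In particular $I(\x)=\|\x\|_2^\rho$ is continuous on $\RR^d\setminus\{\nolla\}$ and $h(n\|\x\|_2)/h(n)\to\|\x\|_2^\rho$.

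\textbf{Upper bound (closed $B$).} If $\nolla\in B$ then $\inf_B I=0$ and the bound is immediate from $\log\PP\le 0$. Otherwise $a:=\inf_{\y\in B}\|\y\|_2>0$, giving $\inf_B I=a^\rho$. For every $\epsilon\in(0,a)$, radial dominance and Theorem~\ref{thm_ball} yield
\[
\PP(\Sum_n/n\in B)\le \PP(\|\Sum_n\|_2>n(a-\epsilon)),\qquad \frac{\log\PP(\|\Sum_n\|_2>n(a-\epsilon))}{h(n)}\longrightarrow -(a-\epsilon)^\rho,
\]
where we used \eqref{eq_limh} to rescale from $h(n(a-\epsilon))$ to $h(n)$. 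Sending $\epsilon\downarrow 0$ and using continuity of $r\mapsto r^\rho$ gives the bound $-a^\rho$.

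\textbf{Lower bound (open $B$).} If $\nolla\in B$, Assumption~\ref{as_U} gives $\EE(\X)=\nolla$ (and \ref{as_R} gives finite first moment), so by the weak LLN $\PP(\Sum_n/n\in B)\to 1$ and the bound reduces to $\liminf\ge 0=-I(\nolla)$. Otherwise pick $\y\in B$, $\y\ne\nolla$, and $\delta>0$ with $B(\y,\delta)\subset B$. Splitting $\Sum_n=\X_1+(\X_2+\cdots+\X_n)$,
\[
\PP(\Sum_n\in B(n\y,n\delta))\ge \PP\bigl(\X_1\in B(n\y,n\delta/2)\bigr)\,\PP\Bigl(\Bigl\|\textstyle\sum_{j=2}^{n}\X_j\Bigr\|_2<n\delta/2\Bigr),
\]
and the second factor tends to $1$ by the LLN. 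For the first factor, a triangle-inequality computation on $R\Uv-n\y=(R-n\|\y\|_2)\,\y/\|\y\|_2+R(\Uv-\y/\|\y\|_2)$ shows that for a small spherical cap $S\subset\Ss^{d-1}$ around $\y/\|\y\|_2$ and small $\delta_1>0$,
\[
\{\Uv\in S\}\cap\{R\in[n\|\y\|_2(1-\delta_1),n\|\y\|_2(1+\delta_1)]\}\subset\{\|R\Uv-n\y\|_2<n\delta/2\}.
\]
The left-hand probability equals $\PP(\Uv\in S,\,R>n\|\y\|_2(1-\delta_1))-\PP(\Uv\in S,\,R>n\|\y\|_2(1+\delta_1))$. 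Assumption~\ref{as_U} ensures $\PP(\Uv\in S\mid R>x)\to\PP(\Uv\in S)>0$, while concavity of $h$ together with $\log x=o(h(x))$ forces $h(n\|\y\|_2(1+\delta_1))-h(n\|\y\|_2(1-\delta_1))\to\infty$, so the subtracted term is negligible relative to the first. Taking $\log$, dividing by $h(n)$, and applying \eqref{eq_limh} yields
\[
\liminf_{n\to\infty}\frac{\log\PP(\X_1\in B(n\y,n\delta/2))}{h(n)}\ge -(\|\y\|_2(1-\delta_1))^\rho.
\]
Sending $\delta_1\downarrow 0$ and then taking supremum over $\y\in B^\circ$ completes the bound $\liminf\ge -\inf_{B^\circ} I$.

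The most delicate step is the strip estimate, i.e.\ ensuring that $\PP(R\in[n\|\y\|_2(1-\delta_1),n\|\y\|_2(1+\delta_1)])$ has the same logarithmic order as $\PP(R>n\|\y\|_2(1-\delta_1))$. In the case $\rho>0$ this is immediate from $h(cx)/h(x)\to c^\rho>1$; the slowly varying case $\rho=0$ (lognormal-type tails) is more subtle, requiring one to exploit concavity and $\log x=o(h(x))$ to rule out logarithmic growth of $h$ and guarantee sufficient separation of the endpoint tail probabilities even though $h(x_2)/h(x_1)\to 1$. Coupling this separation with the qualitative (unrateed) asymptotic independence of \ref{as_U} is the technical heart of the argument.
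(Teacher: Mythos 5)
Your upper bound is fine and is essentially the same radial-cover-free reduction to Theorem~\ref{thm_ball} that the paper uses. The trouble is in the lower bound, where you take a genuinely different route from the paper and it opens a gap that the paper's route avoids.

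The paper derives the LDP from the statement \eqref{prob_VnaS} about the \emph{unbounded} cone sets $V_{na,S}$ of \eqref{def_vrs}. The lower bound for $\PP(\Sum_n\in V_{na,S})$ involves only the event $\{\|\Sum_n\|_2>na\}$ together with a directional constraint, and hence it only ever needs control of the \emph{tail} $\PP(R>x)$, which Assumption~\ref{as_R} provides (up to logarithmic equivalence). You instead pin $\X_1$ into a bounded ball $B(n\y,n\delta/2)$, which forces a two-sided, \emph{annulus} constraint on $R$: you need a lower bound on
\[
\PP\bigl(\Uv\in S,\ R\in[\,n\|\y\|_2(1-\delta_1),\ n\|\y\|_2(1+\delta_1)\,]\bigr),
\]
and this is exactly what Assumption~\ref{as_R} does not give. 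The hypothesis $-\log\PP(R>x)\sim h(x)$ only fixes the tail up to a $(1+o(1))$ factor in the exponent; writing $\PP(R>x)=\exp(-h(x)(1+\varepsilon(x)))$ with $\varepsilon(x)\to 0$, the interval probability is
\[
\exp\bigl(-h(c_1n)(1+\varepsilon(c_1n))\bigr)-\exp\bigl(-h(c_2n)(1+\varepsilon(c_2n))\bigr),
\]
and the uncontrolled error terms $h(c_in)\varepsilon(c_in)$ can be of the same order as (or larger than) $h(c_2n)-h(c_1n)$, so the difference need not be comparable to the first term. In fact the interval probability can be exactly zero: Assumption~\ref{as_R} permits $R$ supported on a sparse geometric lattice such as $\{2^k\}$ with $-\log\PP(R>2^k)\sim h(2^k)$ for a slowly varying concave $h$ (e.g.\ $h(x)=(\log x)^2$), and then for infinitely many $n$ the annulus $[n\|\y\|_2(1-\delta_1),n\|\y\|_2(1+\delta_1)]$ lies strictly between consecutive atoms and the probability vanishes. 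So your ``strip estimate'' cannot be rescued from \ref{as_R} and concavity alone.

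Two more specific points. First, the claim that concavity and $\log x=o(h(x))$ force $h(n\|\y\|_2(1+\delta_1))-h(n\|\y\|_2(1-\delta_1))\to\infty$ is stated as routine, but it is not a one-line consequence; in the slowly varying case $\rho=0$ it is exactly the delicate point, and even if true it still does not close the gap above because the tail is known only up to $\sim$ in the exponent. Second, the issue is not cosmetic: in the $\rho=0$, lattice-$R$ scenario above, a single big jump can never land $\Sum_n$ in the ball $B(n\y,n\delta)$ for the ``bad'' values of $n$, and the best available events (two big jumps, or one big jump plus a large fluctuation of the remainder) cost roughly $e^{-2h(n)}$, so a ball-by-ball lower bound of the form $e^{-(1+o(1))h(n)}$ simply cannot be proved this way. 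If you want to keep the single-big-jump-into-a-ball strategy you would need a stronger hypothesis on $R$ (for instance an intermediate regular variation or O-regular variation condition on $\PP(R>x)$, or a density assumption) that rules out such lacunary supports; with the stated assumptions you should instead argue, as the paper does, through the cone sets $V_{na,S}$ and the representation $\PP(\Sum_n\in V_{na,S})=\PP(\|\Sum_n\|_2>na)\,\PP(\Sum_n/\|\Sum_n\|_2\in S\mid\|\Sum_n\|_2>na)$, which sidesteps interval estimates entirely.
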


The large deviations principle in Theorem \ref{thm_ldp} is a multivariate equivalent of the large deviation principle in \cite{Lehtomaa2} for $d$-dimensional spherical random vectors.

\begin{seur} \label{cor_cont}
If, in addition to the assumptions of Theorem \ref{thm_ldp}, $I(\x)$ is continuous for all $\x\in \RR^d$ and the Borel set $B$ fulfils $\overline{B^\circ}=\overline{B}$, it holds that
\begin{displaymath}
\lim_{n\to\infty} \frac{\log(\PP(\Sum_n\in nB))}{h(n)}=-\inf_{\x\in B} I(\x).
\end{displaymath}
\end{seur}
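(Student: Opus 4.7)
The plan is to derive Corollary \ref{cor_cont} as a direct consequence of the full large deviations principle in Theorem \ref{thm_ldp}. First I would rewrite the event in the usual form $\PP(\Sum_n\in nB)=\PP(\Sum_n/n\in B)$ so that Theorem \ref{thm_ldp} applies to the Borel set $B$, yielding
\begin{displaymath}
-\inf_{\y\in B^\circ} I(\y) \leq \liminf_{n\to\infty} \frac{\log(\PP(\Sum_n\in nB))}{h(n)} \leq \limsup_{n\to\infty} \frac{\log(\PP(\Sum_n\in nB))}{h(n)} \leq -\inf_{\y\in \overline{B}} I(\y).
\end{displaymath}
The task then reduces to showing that the lower and upper rate functions agree, i.e.\ $\inf_{\y\in B^\circ} I(\y)=\inf_{\y\in \overline{B}} I(\y)$, and that this common value equals $\inf_{\x\in B} I(\x)$.

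Next I would establish the chain of inequalities $\inf_{\overline{B}} I\leq \inf_{B} I\leq \inf_{B^\circ} I$, which are immediate from the inclusions $B^\circ\subset B\subset \overline{B}$. Only the reverse inequality $\inf_{B^\circ}I\leq \inf_{\overline{B}}I$ needs justification. For this, I would fix any $\x\in \overline{B}$ and use the assumption $\overline{B^\circ}=\overline{B}$ to choose a sequence $\x_k\in B^\circ$ with $\x_k\to \x$. The continuity of $I$ on $\RR^d$ then gives $I(\x_k)\to I(\x)$, so $\inf_{\y\in B^\circ} I(\y)\leq I(\x)$. Taking the infimum over $\x\in\overline{B}$ yields $\inf_{B^\circ}I\leq \inf_{\overline{B}}I$, and combined with the trivial chain above, all three infima coincide.

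Substituting this equality into the two-sided LDP bound collapses the liminf and limsup into a common value, which proves the claimed limit
\begin{displaymath}
\lim_{n\to\infty}\frac{\log(\PP(\Sum_n\in nB))}{h(n)}=-\inf_{\x\in B}I(\x).
\end{displaymath}
There is no real obstacle here: the argument is a standard density-plus-continuity corollary of any LDP. The only point to be a little careful about is handling the case $\nolla\in \overline{B}$, where the piecewise definition of $I$ jumps; this is absorbed automatically by the continuity hypothesis on $I$, which in particular requires $\lim_{\x\to\nolla} I(\x)=0$, so that approximating $\nolla$ from within $B^\circ$ causes no difficulty.
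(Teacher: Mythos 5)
Your proof is correct and takes essentially the same approach as the paper, whose proof is the single sentence that the claim follows from the assumed continuity of the rate function; you simply make explicit the standard argument — approximating points of $\overline{B}=\overline{B^\circ}$ by points of $B^\circ$ and using continuity of $I$ to force $\inf_{B^\circ}I=\inf_{B}I=\inf_{\overline{B}}I$ — that the paper leaves implicit.
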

\begin{proof}
The claim follows directly from the assumed continuity of the rate function.
\end{proof}

\begin{rem}\label{rem_h}
The existence of Limit (\ref{eq_limh}) implies
\begin{displaymath}
\lim_{x\to\infty}\frac{h(ax)}{h(x)}=a^\alpha
\end{displaymath}
for some $\alpha\geq 0$ due to Theorem 1.4.1 in \cite{Bingham} so $h(x)$ is, in fact, a regularly varying function.
\end{rem}

The rate function is symmetric with respect to the origin. Furthermore, the one-dimensional rate function along any line segment with endpoint in the origin is concave like the rate function in the one-dimensional case examined in \cite{Lehtomaa2}.

The following example examines the rate function for typical distributions that fulfil Assumption \ref{as_R}.
\begin{exa} \label{ex_r}
\begin{enumerate}[label=(\roman*)]
\item Let $R$ be Weibull distributed with parameter $\beta\in (0,1)$. Then, $h(x)=cx^\beta$ with some constant $c>0$ so the index $\alpha$ from Remark \ref{rem_h} is equal to $\beta$. Furthermore, 
\begin{displaymath}
I(\x) = \|\x\|_2^\beta
\end{displaymath}
for all $\x\in\RR^d$ and $I$ is a good rate function. The rate function $I(\x)$ is continuous so Corollary \ref{cor_cont} holds.
\item If $R$ has a lognormal type distribution of the form $\PP(R>x)\sim e^{-(\log(x))^p}$ for $x>x_0$ with some parameter $p>1$, the index $\alpha$ from Remark \ref{rem_h} is $0$ and 
\begin{displaymath}
I(\x)= 1
\end{displaymath}
for all $\x\in \RR^d\backslash \{\nolla\}$ so the rate function $I$ jumps at the origin. The rate function is non-trivial, but not good, according to the terminology used in the context of large deviations.
\end{enumerate}
\end{exa}

\subsection{Auxiliary results}
In order to prove Theorem \ref{thm_ball} and Theorem \ref{thm_ldp}, we need some auxiliary lemmas. The auxiliary results study the projection of the random walk to a one-dimensional setting and its asymptotics.  

The orthogonal projection $P_\vv(\cdot)$ on the subspace spanned by the vector $\vv\in \Ss^{d-1}$ is defined as
\begin{displaymath}
P_\vv(\x) := \langle\vv,\x\rangle \vv,
\end{displaymath}
where the inner product defined as $p_\vv(\x):=\langle\vv,\x\rangle$ indicates the length of the projected vector in the subspace according to the $L_2$-norm.

The next result shows that the projection of the random vector $\X$ has the same asymptotic behaviour as $\|\X\|_2$.
\begin{lem} \label{lem_projh}
Suppose $\X=R\Uv$ where assumptions \ref{as_R} and \ref{as_U} hold. Let $a>0$ and $\vv\in\Ss^{d-1}$. Then
\begin{displaymath}
\lim_{n\to\infty} \frac{\log(\PP(p_\vv(\X)>na))}{h(na)} = -1.
\end{displaymath}
\end{lem}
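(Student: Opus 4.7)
My plan is a sandwich argument: get a matching upper and lower bound on $\log \PP(p_\vv(\X)>na)/h(na)$.

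For the upper bound, I would observe that by Cauchy--Schwarz, $p_\vv(\X)=R\langle \vv,\Uv\rangle \leq R\|\vv\|_2\|\Uv\|_2 = R$. Hence $\PP(p_\vv(\X)>na)\leq \PP(R>na)$, and \ref{as_R} gives
$\limsup_{n\to\infty} \log \PP(p_\vv(\X)>na)/h(na) \leq -1$ immediately.

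For the lower bound, fix $\epsilon \in (0,1)$ and let $S_\epsilon = \{\u \in \Ss^{d-1} : \langle \vv,\u\rangle > 1-\epsilon\}$, a spherical cap around $\vv$. This set has positive Lebesgue measure, so by \ref{as_U}, $\PP(\Uv\in S_\epsilon)>0$ and $\PP(\Uv \in S_\epsilon \mid R>x) \to \PP(\Uv\in S_\epsilon)$ as $x\to\infty$. On the event $\{R > na/(1-\epsilon),\ \Uv\in S_\epsilon\}$ one has $p_\vv(\X) = R\langle\vv,\Uv\rangle > na$, so
\begin{displaymath}
\PP(p_\vv(\X)>na) \geq \PP\bigl(R>na/(1-\epsilon)\bigr)\,\PP\bigl(\Uv\in S_\epsilon \mid R>na/(1-\epsilon)\bigr).
\end{displaymath}
Taking logarithms and dividing by $h(na)$, the conditional probability factor contributes $o(1)$ because its log converges to the finite constant $\log \PP(\Uv\in S_\epsilon)$ while $h(na)\to\infty$ (since $\log x = o(h(x))$). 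The radial factor contributes $-h(na/(1-\epsilon))(1+o(1))/h(na)$ by \ref{as_R}.

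The last step is to control $h(bx)/h(x)$ with $b=1/(1-\epsilon)>1$. Here I use the concavity of $h$: assuming $h(0)\geq 0$ (justified by the WLOG reduction already invoked earlier in the paper), the map $x\mapsto h(x)/x$ is non-increasing, which gives $h(bx)\leq b\,h(x)$ for $b\geq 1$. Hence $\limsup_{n\to\infty} h(na/(1-\epsilon))/h(na) \leq 1/(1-\epsilon)$, and therefore
\begin{displaymath}
\liminf_{n\to\infty} \frac{\log \PP(p_\vv(\X)>na)}{h(na)} \geq -\frac{1}{1-\epsilon}.
\end{displaymath}
Letting $\epsilon\downarrow 0$ matches the upper bound. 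The one subtle point, which I expect is the main obstacle to keep the proof self-contained, is this concavity-based estimate $h(bx)\leq bh(x)$; since the lemma does not assume the regularity condition \eqref{eq_limh}, the bound must be extracted from concavity alone rather than from regular variation.
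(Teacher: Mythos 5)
Your proof is correct and follows essentially the same route as the paper's: an easy upper bound from $\langle\vv,\Uv\rangle\leq 1$, and a lower bound by restricting $\Uv$ to a spherical cap around $\vv$, invoking the asymptotic-independence condition from \ref{as_U}, and then letting the cap shrink. You are in fact a bit more explicit than the paper on the last step: the paper simply asserts that the claim follows because $c_{\vv_\delta}\to 1$ as $\delta\to 0$, whereas you spell out the needed estimate $h(bx)\leq bh(x)$ for $b\geq 1$ via concavity and $h(0)\geq 0$, which is exactly the right way to close the gap without assuming the extra regularity condition \eqref{eq_limh}. Both proofs are sound; yours is marginally more self-contained.
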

\begin{proof}
The asymptotic upper bound is due to 
\begin{displaymath}
\log(\PP(p_{\vv}(\X)>na)) \leq \log(\PP(R>na))\sim -h(na)
\end{displaymath}
since $\langle \vv,\Uv\rangle \leq 1$.

To prove the asymptotic lower bound, we fix $\delta>0$ and set $S(\vv,\delta)\subset \Ss^{d-1}$ to be the $\delta$-environment of the vector $\vv$ in $\Ss^{d-1}$. Defining 
\begin{displaymath}
c_{\vv_\delta} := \min_{\y\in S(\vv,\delta)} \langle\vv,\y\rangle
\end{displaymath}
which is positive choosing $\delta$ small enough, it holds that
\begin{eqnarray*}
&&\log(\PP(p_{\vv}(\X)>na))\\
&\geq&  \log\left(\PP\left(\langle\vv,\Uv\rangle R>na,\Uv\in S(\vv,\delta)\right)\right) \\
&\geq& \log\left(\PP\left(R>\frac{na}{c_{\vv_\delta}}, \Uv\in S(\vv,\delta)\right)\right)\\
&=& \log\left(\PP\left(R>\frac{na}{c_{\vv_\delta}}\right)\right)+\log\left(\PP\left( \Uv\in S(\vv,\delta) \Big| R>\frac{na}{c_{\vv_\delta}}\right)\right).
\end{eqnarray*}
Hence,
\begin{eqnarray*}
\liminf_{n\to\infty} \frac{\log(\PP(p_\vv(\X)>na))}{h(na)} &\geq &
\liminf_{n\to\infty} \frac{\log\left(\PP(\Uv\in S(\vv,\delta))\right)-h\left(\frac{na}{c_{\vv_\delta}}\right)}{h(na)}\\
&=& \liminf_{n\to\infty} \frac{-h\left(\frac{na}{c_{\vv_\delta}}\right)}{h(na)}\\
\end{eqnarray*}
which holds for every $\delta>0$ small enough and proves thus the claim since $c_{\vv_\delta} \to 1$, as $\delta\to 0$.
\end{proof}

In the proof of Lemma \ref{lem_projbigjump} we divide the probability into the term caused by a single big jump and its complement. The following lemma provides an upper bound for the remaining term not caused by a single big jump.

\begin{lem} \label{lem_remterm}
Let $Y, Y_1, Y_2, \dots$ be i.i.d.\ real-valued random variables with $\EE(Y)=0$ and finite moments of all order and suppose $a>0$. Furthermore, let 
\begin{displaymath}
\liminf_{x\to\infty}\frac{-(\log(\PP(Y>x))}{h(x)}\geq 1,
\end{displaymath}
where $h(x)$ fulfils the assumptions on the function $h(x)$ stated in Assumption \ref{as_R} and  additionally $h(0)\geq 0$. Then, 
\begin{displaymath}
\limsup_{n\to\infty}\frac{\log\left(\PP\left(\sum_{i=1}^n Y_i>na, \max_{i=1}^n Y_i\leq na\right)\right)}{h(na)} \leq -1.
\end{displaymath}
\end{lem}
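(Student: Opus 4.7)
My plan is as follows. Write $x = na$ and fix $\epsilon \in (0,1)$; it suffices to establish $\limsup_{n\to\infty} \log \PP(A_n)/h(x) \leq -(1-\epsilon)$ for every such $\epsilon$, where $A_n = \{S_n > x,\, M_n \leq x\}$ and $M_n = \max_{i\leq n} Y_i$. I would begin by splitting on the maximum:
\begin{displaymath}
\PP(A_n) \leq \PP(M_n > (1-\epsilon)x) + \PP(S_n > x,\, M_n \leq (1-\epsilon)x).
\end{displaymath}

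The first piece is handled by the union bound $\PP(M_n > (1-\epsilon)x) \leq n \PP(Y > (1-\epsilon)x)$. The tail hypothesis gives $-\log \PP(Y > (1-\epsilon)x) \geq (1-o(1))\, h((1-\epsilon)x)$, and the concavity of $h$ together with $h(0)\geq 0$ yields $h((1-\epsilon)x) \geq (1-\epsilon) h(x)$. Combined with $\log n = o(h(x))$ (itself a consequence of $\log t = o(h(t))$), this delivers $\limsup_n \log \PP(M_n > (1-\epsilon)x)/h(x) \leq -(1-\epsilon)$, matching the desired rate.

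For the second piece every summand is at most $(1-\epsilon)x$, and I would decompose further by the location of the maximum. On $\{M_n \in ((1-2\epsilon)x,(1-\epsilon)x]\}$, conditioning on the index of this ``moderately large'' jump reduces the remaining problem to the event that the other $n-1$ summands overshoot $\epsilon x$; this residual large deviation is controlled by a one-dimensional estimate of the form in \cite{Lehtomaa2}, giving a bound of order $e^{-(1-o(1))h(\epsilon x)} \leq e^{-(1-o(1))\epsilon h(x)}$ via the concavity inequality $h(\epsilon x) \geq \epsilon h(x)$. Multiplying by $n$ (the choice of index) and by the probability of the ``moderate jump'' itself, which is at most $e^{-(1-o(1))(1-2\epsilon)h(x)}$, produces $n\, e^{-(1-\epsilon)h(x)(1-o(1))}$, so after dividing by $h(x)$ one again obtains $\limsup \leq -(1-\epsilon)$. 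The complementary case $M_n \leq (1-2\epsilon)x$ is handled by iterating the same decomposition at a smaller scale, the recursion being bounded because each step cuts the effective maximum by a fixed factor while $\log n$ stays $o(h(x))$.

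The main obstacle is the residual estimate for $\PP(S_{n-1} > \epsilon x)$: since $Y$ has no exponential moments, a direct Chernoff/Bennett bound applied to the truncated sum yields only polynomial-in-$n$ decay (of order $n^{-c/\epsilon}$), which is insufficient because $\log n/h(x) \to 0$ and therefore does not upgrade the first-piece rate. The concavity of $h$ together with $h(0)\geq 0$ is essential: it allows the product of a ``moderately large jump'' probability and a ``residual large deviation'' probability to combine, via $h((1-2\epsilon)x) + h(\epsilon x) \geq (1-\epsilon)h(x)$, into a single rate of order $e^{-(1-o(1))h(x)}$, which is the heart of the subexponential ``big jump'' heuristic made quantitative under the present assumptions.
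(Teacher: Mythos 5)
Your decomposition by the size of the maximum is a genuinely different route from the paper's, which uses precisely the exponential (Chernoff) bound that you dismiss in your final paragraph. The obstruction you describe --- Bennett giving only $n^{-c/\epsilon}$ --- arises only when one truncates at $\epsilon na$ and tilts by $b_n \asymp 1/(na)$. The paper instead truncates at $na$ and tilts by $b_n = (1-\delta)h(na)/(na)$. Splitting $\EE\bigl(e^{b_n Y}\ind(Y\leq na)\bigr)$ at a level $c_n := b_n^{-1}\varepsilon(n)\to\infty$ with $b_nc_n\to 0$, Taylor-expanding the lower block, integrating the upper block by parts, and then using concavity through $h(y)\geq (y/na)\,h(na)$ makes the integrand $e^{b_n y-(1-\delta/2)h(y)}\leq e^{-(\delta/2)h(y)}$ small, so the truncated moment generating function is $1+o(1/n)$ and $e^{-b_n n a}\bigl[\EE(e^{b_nY}\ind(Y\leq na))\bigr]^{n}\leq e^{-(1-\delta)h(na)(1+o(1))}$ follows. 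The concavity of $h$ with $h(0)\geq 0$ is indeed essential, but it enters by taming the truncated moment generating function, not by splitting a moderate jump from a residual.

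As for your route, the sketch is not closed. The residual bound $\PP(S_{n-1}>k\epsilon na)\leq e^{-(1-o(1))h(k\epsilon na)}$ that you invoke at each stage is itself the one-dimensional big-jump upper bound, whose standard proof requires controlling $\PP(S_{n-1}>k\epsilon na,\, M_{n-1}\leq k\epsilon na)$ --- the quantity in the present lemma with a rescaled $a$. Citing ``a one-dimensional estimate of the form in \cite{Lehtomaa2}'' therefore either begs the question or reduces the whole lemma to that citation; note that the present hypothesis is only $\liminf\bigl(-\log\PP(Y>x)\bigr)/h(x)\geq 1$, chosen precisely because $Y=p_\vv(\X)$ inherits just a one-sided tail bound, so you would have to check that the cited result is available under this weaker hypothesis. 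If instead you bound the residual trivially by $1$, the $k$-th stage of your recursion contributes only the moderate-jump rate $(1-(k+1)\epsilon)h(na)$, which degenerates to $0$ as $k\epsilon\uparrow 1$, so the recursion does not return the claimed $-(1-\epsilon)$ uniformly over its stages. The iteration does terminate after $O(1/\epsilon)$ steps, but that alone does not deliver the rate without a non-circular residual estimate.
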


The proof uses similar ideas as the proof of Theorem 2 in \cite{Lehtomaa2}.

\begin{proof}[Proof of Lemma \ref{lem_remterm}:]
Due to the inequality
\begin{eqnarray*}
&&\EE\left(e^{b_n\sum_{i=1}^n Y_i}\ind\left(\sum_{i=1}^n Y_i>na, \max_{i=1}^n Y_i \leq na\right)\right)\\
&\geq& e^{b_n na}\PP\left(\sum_{i=1}^n Y_i>na, \max_{i=1}^n Y_i \leq na\right)
\end{eqnarray*}
and the independence of the random variables, it holds that
\begin{displaymath}
\PP\left(\sum_{i=1}^nY_i>na, \max_{i=1}^n Y_i\leq na\right) \leq \exp\left(-b_n na\right)\left(\EE\left(e^{b_nY}\ind(Y\leq na)\right)\right)^n.
\end{displaymath}
To bound the expectation from above, we split it into two parts
\begin{eqnarray*}
\EE\left(e^{b_nY}\ind(Y\leq na)\right) &=&
\EE\left(e^{b_nY}\ind(Y\leq c_n)\right) +\EE\left(e^{b_nY}\ind(c_n<Y\leq na)\right) \\
&=& E_1+E_2.
\end{eqnarray*}
Taking $\delta\in (0,1)$ and setting $b_n:=(1-\delta)h(na)/na \to 0$, one can choose $\varepsilon(n)\to 0$ such that $c_n:= b_n^{-1}\varepsilon(n)\to \infty$, as $n\to\infty$. Then, it holds $b_n c_n \to 0$ and therefore one can apply Taylor series to the first term $E_1$,
\begin{eqnarray*}
E_1 &=& \EE((1+b_nY+o(b_nY))\ind(Y\leq c_n))\\
&=& \PP(Y\leq c_n) + b_n \EE(Y\ind(Y\leq c_n))(1+o(1)). 
\end{eqnarray*}
Integrating $E_2$ by parts and rewriting it in terms of the tail distribution of $Y$, one gets
\begin{eqnarray*}
E_2 &=& e^{b_n na}\PP(Y\leq na)-e^{b_n c_n}\PP(Y\leq c_n)-\int_{c_n}^{na} b_n e^{b_n x}\PP(Y\leq x)\ud x\\
&=&e^{b_n c_n} \PP(Y>c_n) -e^{b_n na}\PP(Y>na) + b_n\int_{c_n}^{na} e^{b_n y} \PP(Y>y)\ud y.
\end{eqnarray*}
For every $\delta>0$, it holds $\PP(Y>y)\leq\exp(-(1-\delta/2)h(y))$ for all $y\geq c_n$ choosing $n$ large enough.
Applying additionally Taylor series to the first term of the latter equation results in the upper bound
\begin{eqnarray*}
E_2 &\leq& (1+b_n c_n+o(b_n c_n))\PP(Y>c_n) + b_n \int_{c_n}^{na}\exp\left(b_n y-\left(1-\frac{\delta}{2}\right) h(y)\right) \ud y.
\end{eqnarray*}
Due to the concavity of the function $h(x)$ and the fact that $y\leq na$, it holds that
\begin{displaymath}
h(y)\geq \frac{y}{na}h(na)
\end{displaymath}
and therefore
\begin{eqnarray*}
&& b_n \int_{c_n}^{na}\exp\left(b_n y-\left(1-\frac{\delta}{2}\right)h(y)\right) \ud y \\
&=& b_n \int_{c_n}^{na}\exp\left((1-\delta)\frac{y}{na}h(na)-(1-\delta)h(y)-\frac{\delta}{2} h(y)\right) \ud y \\
&\leq& b_n \int_{c_n}^{na}\exp\left(-\frac{\delta}{2} h(y)\right) \ud y
\leq b_n na\exp\left(-\frac{\delta}{2} h(c_n)\right).
\end{eqnarray*}
Applying the inequality $\log(x)\leq x-1$ to the term $n\log(E_1+E_2)/h(na)$ yields
\begin{eqnarray*}
&&\frac{n}{h(na)}\log\left(\EE\left(e^{b_n Y}\ind(Y\leq na)\right)\right) \\
&\leq& \frac{n}{h(na)}\log\bigg(\PP(Y\leq c_n) + b_n \EE(Y\ind(Y\leq c_n))(1+o(1))+\PP(Y>c_n)\\ 
&& + \ b_n c_n\PP(Y>c_n)(1+o(1))+b_n na\exp\left(-\frac{\delta}{2} h(c_n)\right)\bigg)\\
&\leq&  \frac{1-\delta+o(1)}{a}\left(\EE(Y\ind(Y\leq c_n))+c_n\PP(Y>c_n)\right)+(1-\delta)n\exp\left(-\frac{\delta}{2} h(c_n)\right).
\end{eqnarray*}
The first terms converge to zero, because $\EE(Y)=0$ and all moments are finite. Choosing $\varepsilon(n)$ such that $\log(n)=o\left(h(c_n)\right)$, also the last term converges to zero.
Finally,
\begin{eqnarray*}
&&\limsup_{n\to\infty}\frac{\log\left(\PP\left(\sum_{i=1}^n Y_i>na, \max_{i=1}^n Y_i\leq na\right)\right)}{h(na)}\\
&\leq& \limsup_{n\to\infty}\frac{-b_n na}{h(na)} + \frac{n}{h(na)}\log\left(\EE\left(e^{b_n Y}\ind(Y\leq na)\right)\right) =-(1-\delta).
\end{eqnarray*}
The last inequality holds for any $\delta>0$ which implies the claim.
\end{proof}

\begin{rem} \label{rem_hpilkku}
If $h$ is an increasing concave function with $h(x)=o(x)$ and $\log(x)=o(h(x))$, one can always construct an increasing concave function $h'$ with $h'(0)\geq 0$ and $h'(x)\sim h(x)$, as $x\to \infty$ by changing $h$ to a suitable linear function near the origin. This means that $h'$ is a subadditive function.
\end{rem}

We can now state the principle of a single big jump for projections of multivariate random walks. 

\begin{lem} \label{lem_projbigjump}
Let $\X=R\Uv$, assume \ref{as_R} and \ref{as_U} and let $\vv\in \Ss^{d-1}$ and $a>0$ fixed. Then, it holds that
\begin{displaymath}
\lim_{n\to\infty} \frac{\log(\PP(p_\vv(\Sum_n)>na))}{\log(\PP(p_\vv(\X)>na))}=1.
\end{displaymath}
\end{lem}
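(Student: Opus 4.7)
The plan is to rewrite the claim using Lemma \ref{lem_projh}: since $\log\PP(p_\vv(\X)>na)\sim -h(na)$, the statement is equivalent to
\begin{displaymath}
\lim_{n\to\infty} \frac{\log(\PP(p_\vv(\Sum_n)>na))}{h(na)}=-1.
\end{displaymath}
Set $Y_i:=p_\vv(\X_i)=\langle\vv,\X_i\rangle$; then $(Y_i)$ are i.i.d.\ with $p_\vv(\Sum_n)=\sum_{i=1}^n Y_i$ and $\EE(Y_1)=\langle\vv,\EE(R\Uv)\rangle=0$ by \ref{as_U}. Since $|Y_i|\leq R$, all moments of $Y_1$ are finite by the remark in Subsection~\ref{sec_as}, and Lemma \ref{lem_projh} applied with a general continuous parameter gives $-\log\PP(Y_1>x)\sim h(x)$ as $x\to\infty$.

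For the upper bound I would use the standard single-big-jump split
\begin{displaymath}
\PP\left(\sum_{i=1}^n Y_i>na\right)\leq \PP\left(\max_{1\leq i\leq n} Y_i>na\right)+\PP\left(\sum_{i=1}^n Y_i>na,\;\max_{1\leq i\leq n} Y_i\leq na\right).
\end{displaymath}
A union bound yields $\PP(\max_i Y_i>na)\leq n\PP(Y_1>na)$, and $\log n=o(h(na))$ by the hypothesis $\log x=o(h(x))$ in \ref{as_R}, so $\log\PP(\max_i Y_i>na)=-h(na)(1+o(1))$. The second probability is handled directly by Lemma \ref{lem_remterm}, whose hypotheses have been verified above. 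Combining the two contributions via $\log(x+y)\leq \log 2+\max(\log x,\log y)$ yields $\limsup_{n\to\infty}\log\PP(\sum_{i=1}^n Y_i>na)/h(na)\leq -1$.

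For the lower bound, fix $\varepsilon>0$ and isolate one large jump:
\begin{displaymath}
\PP\left(\sum_{i=1}^n Y_i>na\right)\geq \PP(Y_1>na(1+\varepsilon))\,\PP\left(\sum_{i=2}^n Y_i>-na\varepsilon\right).
\end{displaymath}
The second factor converges to $1$ by the weak law of large numbers applied to the centred sequence $(Y_i)_{i\geq 2}$, so its logarithm is $o(h(na))$. The first factor satisfies $\log\PP(Y_1>na(1+\varepsilon))\sim -h(na(1+\varepsilon))$ by Lemma \ref{lem_projh}. Assuming, by Remark \ref{rem_hpilkku}, that $h(0)\geq 0$, concavity of $h$ forces the sublinear comparison $h(\lambda x)\leq \lambda h(x)$ for $\lambda\geq 1$; in particular $h(na(1+\varepsilon))\leq (1+\varepsilon)h(na)$. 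Hence
\begin{displaymath}
\liminf_{n\to\infty}\frac{\log\PP(\sum_{i=1}^n Y_i>na)}{h(na)}\geq -(1+\varepsilon),
\end{displaymath}
and letting $\varepsilon\downarrow 0$ closes the gap.

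The main obstacle is the lower bound, because no regularity of $h$ beyond what is assumed in \ref{as_R} is available here (the limit \eqref{eq_limh} is only introduced for Theorem \ref{thm_ldp}). The key observation that resolves this is that concavity alone already produces the comparison $h(\lambda x)\leq \lambda h(x)$ for $\lambda\geq 1$, so the multiplicative distortion $\varepsilon$ introduced to absorb the drift of $\sum_{i\geq 2} Y_i$ becomes an additive error $\varepsilon h(na)$ that can be sent to zero after normalisation.
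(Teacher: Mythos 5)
Your proof is correct and takes essentially the same route as the paper: lower bound via a single big jump plus the weak law of large numbers, upper bound via the union bound and Lemma~\ref{lem_remterm}, both normalised through Lemma~\ref{lem_projh}. The one place where you add something the paper leaves tacit is the closing step of the lower bound: the paper writes $\liminf_n -h((1+\varepsilon)na)/h(na)$ and asserts that letting $\varepsilon\downarrow 0$ gives $-1$, without saying why; your observation that concavity together with $h(0)\geq 0$ (Remark~\ref{rem_hpilkku}) yields $h((1+\varepsilon)x)\leq(1+\varepsilon)h(x)$, hence $\liminf_n -h((1+\varepsilon)na)/h(na)\geq -(1+\varepsilon)$, is exactly the missing justification and is worth keeping explicit.
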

\begin{proof}
At first, we show the asymptotic lower bound
\begin{displaymath}
\liminf_{n\to\infty} -\frac{\log(\PP(p_\vv(\Sum_n)>na))}{\log(\PP(p_\vv(\X)>na))}\geq -1.
\end{displaymath}
Using the principle of a single big jump and the weak law of large numbers, it follows that
\begin{eqnarray*}
&&\liminf_{n\to\infty} -\frac{\log(\PP(p_\vv(\Sum_n)>na))}{\log(\PP(p_\vv(\X)>na))}\\
&\geq&\liminf_{n\to\infty} -\frac{\log(\PP(p_\vv(\Sum_{n-1})\leq \varepsilon na,p_\vv(\X_n)>(1+\varepsilon)na))}{\log(\PP(p_\vv(\X)>na))}\\
&\geq&\liminf_{n\to\infty} -\frac{\log(\PP(p_\vv(\Sum_{n-1})\leq \varepsilon na))}{\log(\PP(p_\vv(\X)>na))}-\frac{\log(\PP(p_\vv(\X)> (1+\varepsilon)na))}{\log(\PP(p_\vv(\X)>na))}\\
&=& \liminf_{n\to\infty} -\frac{h((1+\varepsilon)na)}{h(na)}.
\end{eqnarray*}
The last expression applies, additionally to the weak law of large numbers, Lemma \ref{lem_projh}. The fact that this holds for every $\varepsilon>0$ implies the asymptotic lower bound.

It remains to show the asymptotic upper bound
\begin{displaymath}
\limsup_{n\to\infty} -\frac{\log(\PP(p_\vv(\Sum_n)>na))}{\log(\PP(p_\vv(\X)>na))}\leq -1.
\end{displaymath}
Dividing the probability into the case where at least the projection of one random vector exceeds the threshold $na$ and its complement implies
\begin{eqnarray*}
&&\PP(p_\vv(\Sum_n)>na)\\
&\leq& n\PP(p_\vv(\X)>na)+\PP(p_\vv(\Sum_n)>na,p_\vv(\X_i)\leq na \textrm{ for all }i=1,\dots n).
\end{eqnarray*}
Applying Lemma 1.2.15 in \cite{Dembo1},  
\begin{displaymath}
\limsup_{\varepsilon\to 0} \varepsilon\log(a_\varepsilon^1+a_\varepsilon^n) = \max\left( \limsup_{\varepsilon\to 0} \varepsilon a_\varepsilon^1,\limsup_{\varepsilon\to 0} \varepsilon a_\varepsilon^2\right),
\end{displaymath}
we can examine the terms separately since $\left(\log(\PP(p_\vv(\X)>na))\right)^{-1} \to 0$, as $n\to\infty$. We get
\begin{eqnarray*}
&&\limsup_{n\to\infty} -\frac{\log(\PP(p_\vv(\Sum_n)>na))}{\log(\PP(p_\vv(\X)>na))}\\
&\leq& \max\left(\limsup_{n\to\infty} -\frac{\log(n\PP(p_\vv(\X)>na))}{\log(\PP(p_\vv(\X)>na))},\right.\\
&&\left. \limsup_{n\to\infty} -\frac{\log(\PP(p_\vv(\Sum_n)>na,p_\vv(\X_i)\leq na \textrm{ for all }i=1,\dots n))}{\log(\PP(p_\vv(\X)>na))}\right).
\end{eqnarray*}
Clearly, the first term is $-1$. For the second term, we set $Y=p_\vv(\X)$. Since
\begin{displaymath}
\log(\PP(p_\vv(\X)>na))=\log(\PP(\langle \vv,\Uv \rangle R>na)) \leq \log(\PP(R>na)) \sim -h(na),
\end{displaymath}
we can apply Lemma \ref{lem_remterm} with the help of Remark \ref{rem_hpilkku}. Finally, applying Lemma \ref{lem_projh} we get the upper bound $-1$ also for the second term which completes the proof.
\end{proof}

\subsection{Proof of Theorem \ref{thm_ball} and Theorem \ref{thm_ldp}} \label{sec_proofsph}
We can now state the proofs of the main results of Section \ref{sec_sph}. 

\begin{proof}[{\bf Proof of Theorem \ref{thm_ball}:}]
First, we approximate the event $\{\|\Sum_n\|_2>na\}$ by projections in different directions. The fact that the principle of a single big jump holds for any orthogonal projection yields the desired asymptotic behaviour.
Since $\PP(\|\Sum_n\|_2>na)\geq \PP(p_\vv(\Sum_n)>na)$ for every $\vv\in \Ss^{d-1}$ the asymptotic lower bound 
\begin{displaymath}
\liminf_{n\to\infty} \frac{\log(\PP(\|\Sum_n\|_2>na))}{h(na)} \geq -1
\end{displaymath}
is an immediate consequence of Lemma \ref{lem_projh} and Lemma \ref{lem_projbigjump}.

To prove the corresponding upper bound, we cover the set $\{\x:\|\x\|_2>na\}$ by a finite union of $m$ sets defined by orthogonal projections and study the limit, as $m\to \infty$.
To this end, let $m\geq 2d$. We aim to choose vectors $\vv_k\in\Ss^{d-1}, k=1,\dots,m$, such that unions of form $\cup_{k=1}^m\{\x:p_{\vv_k}(\x)>\varepsilon\}$, where $\varepsilon>0$, can be used to cover the whole space except some neighbourhood of the origin. For example, in $\RR^2$, an easy way to define the vectors $\vv_k\in \Ss^{1}$ is to take $\vv_k= \left(\cos(2k\pi/m),\sin(2k\pi/m)\right)^T$. 

Choosing $\vv_k$ appropriately, for instance, such that they are uniformly spaced on the unit sphere, there exists some $\varepsilon(m)>0$ such that 
\begin{displaymath}
D:=\cup_{k=1}^m \{\x:p_{\vv_k}(\x)> (1-\varepsilon(m))na\} \supset \{\x: \|\x\|_2 >na\}
\end{displaymath}
and, more specifically, we can choose the numbers $\varepsilon(m)$ so that $\varepsilon(m)\to 0$, as $m\to\infty$.
Hence,
\begin{displaymath}
\PP(\|\Sum_n\|_2>na) \leq \PP(\Sum_n \in D) \leq \sum_{k=1}^m \PP(p_{\vv_k}(\Sum_n)>(1-\varepsilon(m))na).
\end{displaymath}
Lemma 1.2.15 in \cite{Dembo1} yields 
\begin{eqnarray*}
&&\limsup_{n\to\infty}\frac{\log(\PP(\|\Sum_n\|_2>na))}{h(na)}\\
&\leq& \limsup_{n\to\infty}\frac{\log\left(\sum_{k=1}^{m} \PP(p_{\vv_k}(\Sum_n)>(1-\varepsilon(m))na)\right)}{h(na)}\\
&\leq& \max_{k=1}^{m}\left(\limsup_{n\to\infty}\frac{\log\left(\PP(p_{\vv_k}(\Sum_n)>(1-\varepsilon(m))na)\right)}{h(na)}\right).
\end{eqnarray*}

Applying Lemma \ref{lem_projbigjump} and Lemma \ref{lem_projh}, we get
\begin{eqnarray*}
\limsup_{n\to\infty}\frac{\log(\PP(\|\Sum_n\|_2>na))}{h(na)}
&\leq& \limsup_{n\to\infty}\frac{-h((1-\varepsilon(m))na)}{h(na)}.
\end{eqnarray*}
Letting $m\to\infty$, it holds that $\varepsilon(m)\to 0$ and the term above converges to $-1$.
\end{proof}

\begin{proof}[{\bf Proof of Theorem \ref{thm_ldp}:}]
The proof of the large deviations principle is based on the fact that 
\begin{equation} \label{prob_VnaS}
\lim_{n\to\infty} \frac{\log(\PP(\Sum_n\in V_{na, S}))}{h(na)}=-1,
\end{equation}
since open sets of the form $V_{a,S}$ defined in (\ref{def_vrs}), where $a>0$ and $S$ is an open subset of $\Ss^{d-1}$, generate $\RR^d$. 

The limit superior of (\ref{prob_VnaS}) follows directly from Theorem \ref{thm_ball} due to the inequality
\begin{displaymath}
\PP(\Sum_n\in V_{na,S}) \leq \PP(\|\Sum_n\|_2>na).
\end{displaymath}
Rewriting
\begin{displaymath}
\PP(\Sum_n\in V_{na,S}) = \PP(\|\Sum_n\|_2>na)\PP\left(\frac{\Sum_n}{\|\Sum_n\|_2}\in S\  \Big|\ \|\Sum_n\|_2>na\right) 
\end{displaymath}
yields the limit inferior of (\ref{prob_VnaS}) since the last probability is positive.

The weak law of large numbers implies $\lim_{n\to\infty} \PP(\Sum_n/n \in B(\nolla,\varepsilon))=1$ for all $\varepsilon>0$ and thus if $\nolla\in B\subset \RR^d$
\begin{displaymath}
\lim_{n\to\infty}\frac{\log(\PP(\Sum_n/n \in B))}{h(n)} = 0.
\end{displaymath}
Finally, by (\ref{prob_VnaS}), it is easy to obtain the inequalities
\begin{align*}
\limsup_{n\to\infty} \frac{\log(\PP(\Sum_n/n\in F))}{h(n)}&\leq -\inf_{y\in F} I(y) \textrm{ for all closed sets }F\subset \RR^d,\\
\liminf_{n\to\infty} \frac{\log(\PP(\Sum_n/n\in G))}{h(n)}&\geq -\inf_{y\in G} I(y) \textrm{ for all open sets }G\subset \RR^d
\end{align*}
which result in the full large deviations principle.
\end{proof}

\section{Asymptotics in the elliptical case}

\subsection{Contraction principle}
To extend the result to asymptotically elliptically distributed random vectors, we apply the contraction principle, Theorem 4.2.1 in \cite{Dembo1}, to the large deviations result in Theorem \ref{thm_ldp}.

In this section, we study the asymptotics of the random walk $(\Sum_n)$ with increments $\X=R\Thetav$ where $\Thetav$ is distributed on some $d$-dimensional ellipsoid $\Omega$ centred at the origin.
For asymptotically elliptical distributions a suitable linear map in the contraction principle is the bijective function $\Lambda:\RR^d\to\RR^d$ defined by $\Lambda(\x)=A\x$ mapping vectors from $\Ss^{d-1}$ to $\Omega$. Here, $A$ is a symmetric, positive definite and thus invertible $d\times d$ matrix such that $\Lambda(\Ss^{d-1})=\Omega$. Since $\Lambda$ is linear, 
$$\Lambda(\Sum_n) = \sum_{i=1}^n \Lambda(\X_i)$$ 
holds. 

\begin{thm} \label{thm_ldpellipse}
Let $\X=R\Thetav$ where $R$ and $\Thetav$ fulfil assumptions \ref{as_R} and \ref{as_thetav} and Limit (\ref{eq_limh}) exists. 
Let $A$ be a symmetric, positive definite $d\times d$ matrix and define $\Lambda:\RR^d\to\RR^d$ by $\Lambda(\x)=A\x$ such that $\Lambda$ maps $\Ss^{d-1}$ to $\Omega$. Then, the process $\{\Sum_n/n\}$ satisfies the large deviations principle with scale $h$, so for all Borel sets $B\subset \RR^d$
\begin{align*}
-\inf_{\y\in \Lambda^{-1}(B^\circ)} I(\y) &\leq \liminf_{n\to\infty} \frac{\log(\PP(\Sum_n/n\in B))}{h(n)}\\
&\leq \limsup_{n\to\infty} \frac{\log(\PP(\Sum_n/n\in B))}{h(n)} \leq -\inf_{\y\in \Lambda^{-1}(\overline{B})} I(\y)
\end{align*}
where  
\begin{displaymath}
I(\x) = \left\{\begin{array}{ll}\lim_{n\to\infty}\frac{h(n\|\x\|_2)}{h(n)}, & \textrm{if }\x\neq \nolla\\
0,& \textrm{if }\x=\nolla
\end{array}\right.
\end{displaymath}
and $\Lambda^{-1}:\RR^d\to\RR^d$ with $\Lambda^{-1}(\x)=A^{-1}\x$.
\end{thm}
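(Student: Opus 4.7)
The strategy is to reduce the elliptical case to the spherical case of Theorem \ref{thm_ldp} by transporting the problem through the bijective linear map $\Lambda$. Since $\Thetav$ is supported on $\Omega = \Lambda(\Ss^{d-1})$ and $A$ is invertible, I define the auxiliary random vector $\Uv := \Lambda^{-1}(\Thetav)$, which takes values in $\Ss^{d-1}$. Writing $\X'_i := R_i\Uv_i = \Lambda^{-1}(\X_i)$ and $\Sum'_n := \X'_1+\dots+\X'_n$, linearity of $\Lambda$ gives $\Sum_n = \Lambda(\Sum'_n)$, so that
\begin{equation*}
\PP(\Sum_n/n \in B) = \PP(\Sum'_n/n \in \Lambda^{-1}(B))
\end{equation*}
for every Borel set $B\subset\RR^d$.

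The first step is to verify that $\Uv$ satisfies assumption \ref{as_U}, so that Theorem \ref{thm_ldp} applies to $(\Sum'_n)$. For any $S\subset\Ss^{d-1}$ of positive Lebesgue measure, $\Lambda(S)\subset\Omega$ has positive surface measure (since $\Lambda$ is a linear isomorphism, hence bi-Lipschitz on the sphere), so \ref{as_thetav} yields $\PP(\Uv\in S) = \PP(\Thetav\in\Lambda(S)) > 0$. The same identity combined with the asymptotic independence part of \ref{as_thetav} gives
\begin{equation*}
\PP(\Uv\in S\mid R>x) = \PP(\Thetav\in\Lambda(S)\mid R>x) \longrightarrow \PP(\Thetav\in\Lambda(S)) = \PP(\Uv\in S).
\end{equation*}
Finally, $\EE(R\Uv) = \EE(\Lambda^{-1}(R\Thetav)) = A^{-1}\EE(R\Thetav) = \nolla$ by linearity, because $A^{-1}$ commutes with the expectation. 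Together with assumption \ref{as_R} on $R$, which is unchanged, this shows that $(\Sum'_n)$ falls under the hypotheses of Theorem \ref{thm_ldp}.

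With Theorem \ref{thm_ldp} in hand, I obtain the large deviations bounds for $\Sum'_n/n$ with rate $h$ and rate function $I$. The final step is to transport these bounds back to $\Sum_n/n$ via the displayed identity $\PP(\Sum_n/n\in B) = \PP(\Sum'_n/n\in \Lambda^{-1}(B))$. Because $\Lambda$ is a linear bijection of $\RR^d$, it is a homeomorphism, so $\Lambda^{-1}(B^\circ) = (\Lambda^{-1}(B))^\circ$ and $\Lambda^{-1}(\overline{B}) = \overline{\Lambda^{-1}(B)}$. Applying Theorem \ref{thm_ldp} to the set $\Lambda^{-1}(B)$ therefore yields
\begin{equation*}
-\inf_{\y\in\Lambda^{-1}(B^\circ)} I(\y) \leq \liminf_{n\to\infty} \frac{\log\PP(\Sum_n/n\in B)}{h(n)} \leq \limsup_{n\to\infty} \frac{\log\PP(\Sum_n/n\in B)}{h(n)} \leq -\inf_{\y\in\Lambda^{-1}(\overline{B})} I(\y),
\end{equation*}
which is exactly the stated conclusion. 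This is really a direct application of the contraction principle (Theorem 4.2.1 in \cite{Dembo1}) with continuous map $\Lambda$: the induced rate function on the image side is $J(\x) = \inf\{I(\y):\Lambda(\y)=\x\} = I(\Lambda^{-1}(\x))$, whose infimum over $B$ equals the infimum of $I$ over $\Lambda^{-1}(B)$. The only non-mechanical work is the verification that $\Uv$ inherits \ref{as_U} from $\Thetav$, which I expect to be the main, though routine, obstacle.
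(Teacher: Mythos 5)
Your proposal is correct and follows the same fundamental route as the paper: push the random walk through the linear bijection $\Lambda$, reduce to the spherical LDP of Theorem \ref{thm_ldp}, and transfer the bounds back, which is exactly what the contraction principle does for a bijective continuous map. Two points where your write-up is actually a bit more careful than the published proof are worth noting. First, you explicitly verify that $\Uv := \Lambda^{-1}(\Thetav)$ inherits assumption \ref{as_U} from \ref{as_thetav}; the paper leaves this implicit. Second, you transfer the bounds directly via the homeomorphism identities $\Lambda^{-1}(B^\circ) = (\Lambda^{-1}(B))^\circ$ and $\Lambda^{-1}(\overline{B}) = \overline{\Lambda^{-1}(B)}$, which makes no use of goodness of the rate function $I$; the statement of Theorem 4.2.1 in \cite{Dembo1} that the paper cites assumes $I$ is good, and as Example \ref{ex_r} shows, $I$ is \emph{not} good in the lognormal-type case. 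So your direct argument actually closes a small gap that the bare citation of the contraction principle leaves open when $I$ fails to be good. The rest (linearity giving $\Sum_n = \Lambda(\Sum'_n)$, the identity $\PP(\Sum_n/n\in B) = \PP(\Sum'_n/n\in\Lambda^{-1}(B))$, and $\EE(R\Uv)=A^{-1}\EE(R\Thetav)=\nolla$) matches the paper.
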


\begin{exa}
Similarly to Example \ref{ex_r}, if $\X$ fulfils the assumptions of Theorem \ref{thm_ldpellipse} where $R$ follows a Weibull distribution with parameter $\beta\in(0,1)$, 
\begin{displaymath}
I(\x)= \|\x\|^\beta_2
\end{displaymath}
is a good rate function. If $R$ has a lognormal type distribution, the rate function is a constant everywhere except at the origin and hence not good.
\end{exa}

\subsection{Proof of Theorem \ref{thm_ldpellipse}}
The proof of the main result in this section relies on the linearity of the function $\Lambda$ and the contraction principle for large deviation principles. 

\begin{proof}[{\bf Proof of Theorem \ref{thm_ldpellipse}:}] 
The linear transformation
\begin{displaymath}
\Lambda(\x) = A\x
\end{displaymath}
where $A$ is a symmetric, positive definite $d\times d$ matrix is a bijective function $\Lambda:\RR^d\to\RR^d$. Its inverse function $\Lambda^{-1}:\RR^d\to\RR^d$ exists due to the invertibility of the matrix $A$ and is $\Lambda^{-1}(\x)=A^{-1}\x$. The linear transformation $\Lambda$ maps $\Ss^{d-1}$ to $\Omega$. By the linearity of $\Lambda$, it holds $\Lambda(\Sum_n)=\sum_{i=1}^n\Lambda(\X_i)$ so the mapping of the random walk is equal to the sum of mappings of the increments. 

The claim follows then from the contraction principle, see, for instance, Theorem 4.2.1 in \cite{Dembo1}. 
 Applying the contraction principle with the continuous function $\Lambda^{-1}$ we get 
\begin{eqnarray*}
\inf_{\y\in B} J(\y)&=& \inf_{\y\in B}\inf_{\x\in \RR^d} \{I(\x): \Lambda(\x)=\y\}\\
&=&\inf_{\x\in \RR^d}\{I(\x):\x\in \Lambda^{-1}(B)\} =\inf_{\x\in \Lambda^{-1}(B)} I(\x),
\end{eqnarray*}
which completes the proof.
\end{proof}

\section{Applications to reinsurance}
\subsection{Introduction and setting}
An insurance company with $d$ lines of business might optimise the asymptotic behaviour of their ruin probability by sharing its risks for some lines of business using quota share reinsurance contracts. Quota share reinsurance is a proportional reinsurance \cite{Embrechts1}, where the insurer (the ceding company) pays only a fixed ratio for each claim while the reinsurance company pays the rest. In general contracts, the insurance company shares both the losses and the profits with the reinsurer.

A diagonal $d\times d$ matrix $Q$ can be used to represent a quota share reinsurance strategy for an insurance company with $d$ lines of business. The element $q_{k,k}$ refers then to the quota share ratio of the $k$th line of business, i.e.\ the ceding company pays $q_{k,k}X^k$ of the $k$th line of business and the reinsurance company pays the remaining part $(1-q_{k,k})X^k$. It is natural to assume that $q_{k,k}\in (0,1]$ for all indices $k=1,\dots , d$ since typically the insurance company keeps some share for every line of business. Under this assumption Matrix $Q$ is invertible.

For the ceding company, the aim could be to find a quota share reinsurance strategy defined by a matrix $Q$ such that 
\begin{equation}\label{eq_suitableq}
\lim_{n\to\infty} \frac{\log(\PP(\|\Sum_n/n\|_2>a))}{h(n)} > \lim_{n\to\infty} \frac{\log(\PP(\|Q\Sum_n/n\|_2>a))}{h(n)}
\end{equation}
because it reduces the asymptotic size of the ruin probability $\PP(\|\Sum_n/n\|_2>a)$. 

We look at the quota share reinsurance from two different perspectives. In Subsection \ref{sec_qsins}, we model the optimal quota share reinsurance strategy from the point of view of the ceding company which sets reinsurance only for the lines of business with highest risks. 
Subsection \ref{sec_qsre} compares different quota share risk sharing strategies from the viewpoint of a reinsurance company that wants to offer quota share reinsurance while minimising their own risks.

We model the risk process of an insurance company with $d$ lines of business as a $d$-dimensional random walk $(\Sum_n)$. The component of the increment $\X$ represents the difference between the claim size and the associated net premium in the corresponding line of business. We assume that $\X$ is asymptotically elliptically distributed: 
Let $\X=R\Thetav$ where $R$ and $\Thetav$ fulfil assumptions \ref{as_R} and \ref{as_thetav} and $\Omega$ is an ellipse or ellipsoid defined by the bijection $\Lambda:\RR^d\to \RR^d$ with $\Lambda(\x)=A\x$ where $A$ is a symmetric, positive definite $d\times d$ matrix.  

If there are several reinsurance strategies that yield the same right-hand side of Inequality (\ref{eq_suitableq}), the insurance company chooses the one with the smallest premium. In this setting, we define the premium of the reinsurance strategy $Q$ as
\begin{equation}\label{def_premium}
p(Q)= {\bf 1}^T(I-Q)\p =\sum_{j=1}^d (1-q_{j,j})p_j
\end{equation}
where $\p$ is a positive premium vector, ${\bf 1}$ denotes the $d$-dimensional vector with ones and $I$ the $d\times d$ identity matrix. The positive premium vector $\p$ contains the premium rates of the reinsurances for each line of business when the entire component is reinsured. For example, $p_j$ could be connected to the expected loss of the reinsurance company added with a safety loading. 
In (\ref{def_premium}), the constants $p_j$ are multiplied by the factors $(1-q_{j,j})$, where the $q$-coefficients can be selected by the ceding company. The premium vector is considered as given and the ceding company cannot affect the values of this vector. Therefore, (\ref{def_premium}) is the premium for the entire reinsurance strategy. 
If the insurance company does not take reinsurance for the $j$th line of business, it selects $q_{j,j}=1$. Hence, the premium for the reinsurance of the $j$th line of business is zero in this case.

\subsection{Quota share reinsurance strategy of the ceding company}\label{sec_qsins}
The aim of the insurance company is to identify the riskiest lines of business and choose its quota share reinsurance strategy reducing these risks. The ceding company typically only wants to insure the highest risks. This is why it is natural to assume $q_{k,k}=1$ for at least one index $k$ which represents the line of business with the lowest asymptotic risk.

Since quota share reinsurance is defined component-by-component one can find an optimal reinsurance strategy for distributions on ellipsoids orientated along the axes. The general case can be mathematically reduced into this setting by rotating the original data. However, if the data is transformed using a rotation, suitable QS contracts might not be immediately available on the market because the new axes would not correspond to the lines of business. 

\begin{rem}
If $\Omega$ is a $d$-dimensional ellipsoid orientated along the axes, there exists a positive, diagonal $d\times d$ matrix $A$ such that the mapping $\Lambda:\RR^d\to\RR^d, \Lambda(\x)=A\x$ of the unit sphere $\Ss^{d-1}$ generates $\Omega$. Furthermore, if $A$ is a positive, diagonal $d\times d$ matrix, the ellipsoid generated by $\Lambda(\Ss^{d-1})$ is orientated along the axes. 
\end{rem}

\begin{thm}\label{thm_reins}
Let $\X=R\Thetav$ where $\Thetav$ fulfils assumption \ref{as_thetav} and $\Omega=\Lambda(\Ss^{d-1})$ is defined by the linear transformation $\Lambda(\x)=A\x$, where $A$ is a diagonal $d\times d$ matrix  with $a_{k,k}>0$ for all $k=1,\dots,d$. Furthermore, we assume that $R$ follows a Weibull distribution with parameter $\beta\in (0,1)$. 
Then, the quota share reinsurance strategy defined by the matrix $Q=\min_{j=1}^d a_{j,j} A^{-1}$ yields the inequality
\begin{equation} \label{eq_asrel}
\lim_{n\to\infty} \frac{\log(\PP(\|\Sum_n/n\|_2>a))}{h(n)} \geq \lim_{n\to\infty} \frac{\log(\PP(\|Q\Sum_n/n\|_2>a))}{h(n)} 
\end{equation}
for any $a>0$ and $Q$ minimises the right-hand side of Inequality (\ref{eq_asrel}) over all strategies. The minimum is unique under the additional condition that $p(Q)$ is also minimised. 
\end{thm}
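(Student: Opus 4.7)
The strategy is to use Theorem \ref{thm_ldpellipse} to compute, in closed form, both $\lim_{n\to\infty}\log\PP(\|\Sum_n/n\|_2>a)/h(n)$ and $\lim_{n\to\infty}\log\PP(\|Q\Sum_n/n\|_2>a)/h(n)$, and then minimise the latter over admissible matrices $Q$. Under the Weibull assumption $h(x)=cx^\beta$ is regularly varying, so Limit (\ref{eq_limh}) exists and the rate function is $I(\y)=\|\y\|_2^\beta$, which is continuous on $\RR^d$. A preliminary observation is that $Q\X=R(QA)\Uv$ and $QA$ is again a symmetric, positive definite diagonal matrix; hence $(Q\Sum_n)$ falls within the scope of Theorem \ref{thm_ldpellipse} with $A$ replaced by $QA$, and $Q\Thetav=(QA)\Uv$ inherits assumption \ref{as_thetav} from $\Thetav$.

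I would first carry out the key deterministic computation: for any positive diagonal matrix $M$ with entries $m_{j,j}>0$,
\begin{equation*}
\inf\bigl\{\|\y\|_2^\beta:\|M\y\|_2\geq a\bigr\}=\left(\frac{a}{\max_j m_{j,j}}\right)^{\!\beta},
\end{equation*}
the infimum being attained on the coordinate axis of maximal $m_{j,j}$. For the target set $B=\{\x:\|\x\|_2>a\}$ one has $B^\circ=B$, so the closed and open preimages $\Lambda^{-1}(\overline{B})$ and $\Lambda^{-1}(B^\circ)$ differ only by the hypersurface $\|M\y\|_2=a$, and continuity of $\|\y\|_2^\beta$ (approach any boundary point from outside by a small dilation) forces the two infima in Theorem \ref{thm_ldpellipse} to coincide. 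Combining these two facts with $M=A$ and with $M=QA$ yields
\begin{align*}
\lim_{n\to\infty}\frac{\log\PP(\|\Sum_n/n\|_2>a)}{h(n)}&=-\left(\frac{a}{\max_j a_{j,j}}\right)^{\!\beta},\\
\lim_{n\to\infty}\frac{\log\PP(\|Q\Sum_n/n\|_2>a)}{h(n)}&=-\left(\frac{a}{\max_j q_{j,j}a_{j,j}}\right)^{\!\beta}.
\end{align*}

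For the specific choice $Q=(\min_l a_{l,l})A^{-1}$ every $q_{j,j}a_{j,j}$ equals $\min_l a_{l,l}\leq\max_j a_{j,j}$, which immediately yields inequality (\ref{eq_asrel}). Making the right-hand side of (\ref{eq_asrel}) as negative as possible is equivalent to minimising $\max_j q_{j,j}a_{j,j}$. Under the standing convention that $q_{j,j}\in(0,1]$ and $q_{k,k}=1$ for at least one $k$, one has $\max_j q_{j,j}a_{j,j}\geq a_{k,k}\geq\min_l a_{l,l}$, and equality forces $a_{k,k}=\min_l a_{l,l}$ together with $q_{j,j}\leq\min_l a_{l,l}/a_{j,j}$ for every $j$. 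Finally, since $p(Q)=\sum_j(1-q_{j,j})p_j$ is strictly decreasing in each $q_{j,j}$ and the premium vector $\p$ has positive entries, adding the minimal-premium criterion pins down $q_{j,j}=\min_l a_{l,l}/a_{j,j}$, proving uniqueness.

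The only non-routine piece is the simultaneous application of Theorem \ref{thm_ldpellipse} to $(\Sum_n)$ and to $(Q\Sum_n)$ together with the upgrade of the two-sided LDP bounds to an exact limit on the specific set $\{\|\x\|_2>a\}$; once that is in place, the remaining finite-dimensional optimisation over diagonal matrices and the premium tie-breaker are elementary.
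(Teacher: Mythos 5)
Your proposal is correct and follows essentially the same route as the paper's own proof: apply the elliptical large deviations principle (Theorem \ref{thm_ldpellipse}), reduce the limit to the geometric infimum $(a/\max_j q_{j,j}a_{j,j})^\beta$, minimise $\max_j q_{j,j}a_{j,j}$ over admissible diagonal $Q$, and break ties with the premium. The only cosmetic difference is that you apply the LDP to the transformed walk $Q\Sum_n$ with matrix $QA$ rather than to $\Sum_n$ over the preimage set, and you state the infimum in closed form up front instead of verifying it on the unit vectors; both are equivalent to the paper's computation.
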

\begin{proof}
The assumptions on $R$ imply the large deviations principle 
\begin{displaymath}
\lim_{n\to\infty} \frac{\log(\PP(\|Q\Sum_n/n\|_2>a))}{h(n)} =-\inf_{\y\in B(\nolla, a)^c} \|A^{-1}Q^{-1}\y\|_2^\beta.
\end{displaymath}
Therefore, it is sufficient to show that the matrix $Q$ is the matrix that maximises 
\begin{equation} \label{eq_maksimoitava}
\max_{\tilde{Q}\in \mathcal{Q}}\inf_{\x\in B(\nolla, a)^c} \|A^{-1}\tilde{Q}^{-1}\x\|_2
\end{equation}
where $\mathcal{Q}$ is the set of $d\times d$ diagonal matrices with $q_{j,j}\in (0,1]$ for all $j=1,\dots,d$ and $q_{k,k}=1$ for at least one index $k\in \{1,\dots,d\}$. 
Without loss of generality we assume $\min_{j=1}^d a_{j,j}= a_{1,1}$. By the property $\|c\x\|_2=|c|\|\x\|_2$, the infimum is always achieved at the boundary, so
\begin{displaymath}
\inf_{\x\in B(\nolla, a)^c} \|A^{-1}\tilde{Q}^{-1}\x\|_2 = \inf_{\|\x\|_2=a}\|A^{-1}\tilde{Q}^{-1}\x\|_2.
\end{displaymath}
The fact that $\inf_{\|\x\|_2=a}\|A^{-1}Q^{-1}\x\|_2= a/a_{1,1}$ follows directly from the definition of $Q$ since
\begin{displaymath}
A^{-1}Q^{-1}\x= A^{-1}(\min_{j=1}^d a_{j,j} A^{-1})^{-1}\x = \frac{1}{a_{1,1}}A^{-1}A\x=\frac{1}{a_{1,1}}\x.
\end{displaymath}
It remains to show that a matrix $\tilde{Q}$ that maximises (\ref{eq_maksimoitava}) is of the form $\tilde{q}_{k,k} \leq a_{1,1}/a_{k,k}$ for all $k=1,\dots,d$. In order for $\tilde{Q}$ to maximise (\ref{eq_maksimoitava}) it has to hold that
\begin{displaymath}
\|A^{-1}\tilde{Q}^{-1}\x\|_2 = \left(\sum_{i=1}^d\frac{x_i^2}{a_{i,i}^2\tilde{q}_{i,i}^2}\right)^\frac{1}{2} \geq \frac{a}{a_{1,1}}
\end{displaymath}
for all $\x$ with $\|\x\|_2=a$. Checking the inequality for $a$ times the unit vectors we get the condition $\tilde{q}_{k,k} \leq a_{1,1}/a_{k,k}$ for all $k=1,\dots,d$. Due to the additional condition that $q_{k,k}=1$ for at least one index, we need to set $q_{1,1}=1$. Now, taking $\tilde{q}_{1,1}=1$ and $\tilde{q}_{k,k}<a_{1,1}/a_{k,k}$ for all $k=2,\dots,d$ yields
\begin{displaymath}
\inf_{\|\x\|_2=a} \|A^{-1}\tilde{Q}^{-1}\x\|_2 = \left(\sum_{i=1}^d\frac{x_i^2}{a_{i,i}^2\tilde{q}_{i,i}^2}\right)^\frac{1}{2} \leq \left(\frac{a^2}{a_{1,1}^2\tilde{q}_{1,1}^2}\right)^\frac{1}{2}= \frac{a}{a_{1,1}}
\end{displaymath}
so 
\begin{displaymath}
\inf_{\|\x\|_2=a}\|A^{-1}\tilde{Q}^{-1}\x\|_2 = \inf_{\|\x\|_2=a}\|A^{-1}Q^{-1}\x\|_2.
\end{displaymath}
Comparing the premium of the reinsurance strategies $Q$ and $\tilde{Q}$, it is easy to obtain $p(Q)<p(\tilde{Q})$ due to the positivity of the vector $\p$. Thus, $Q$ is the quota share reinsurance strategy that maximises (\ref{eq_maksimoitava}) with the lowest premium.
\end{proof}

\begin{rem}
Theorem \ref{thm_reins} can be extended to distributions for which Assumption \ref{as_R} hold and, in addition, $\lim_{n\to\infty} h(n\|\x\|_2)/h(n)=a^\alpha$ for $\alpha>0$. If $\lim_{n\to\infty} h(n\|\x\|_2)/h(n)$ is a constant, a quota share reinsurance does not improve the asymptotic behaviour of the logarithmic ruin probability and hence Theorem \ref{thm_reins} does not hold for lognormal type distributions. 
\end{rem}

The optimal matrix $Q$ transforms the set $\Omega$ to a $d$-dimensional ball. Hence, the probability that the reinsured risk process exceeds a threshold in a selected norm has the same asymptotics in all directions. Taking reinsurance defined by the matrix $Q$ reduces the risks of the riskier lines of business to the same level of the less risky line of business. 

\begin{rem}
If $a_{k,k}= a_{1,1}$ for all $k\in \{1,\dots,d\}$, $\Omega$ describes a $d$-dimensional ball and quota share reinsurance does not improve the asymptotic behaviour of the logarithmic ruin probability.
\end{rem}

If the ellipsoid is not orientated along the axes, it is not possible to find an optimal quota share reinsurance, which is defined component-by-component and results in equal asymptotic behaviour of all directions. Depending on the orientation of the ellipsoid, it might be possible to find a quota share reinsurance strategy defined component-by-component that still reduces the risks in the most risky directions and thus reduces the ruin probability.

\subsection{Quota share risk sharing of the reinsurance company}\label{sec_qsre}
A reinsurance company is interested in optimising the risk sharing portfolio such that 
\begin{displaymath}
\lim_{n\to\infty} \frac{\log(\PP(\|\Sum_n/n\|_2>a))}{h(n)} > \lim_{n\to\infty} \frac{\log(\PP(\|(I-Q)\Sum_n/n\|_2>a))}{h(n)},
\end{displaymath}
where $Q$ is a positive, diagonal $d\times d$ matrix with $0<q_{j,j}<1$ for all $j=1,\dots, d$. The condition $q_{j,j}<1$ for all $j=1,\dots, d$ is due to the natural assumption that the reinsurer offers reinsurance for all lines of business. Comparing the situation with the quota share reinsurance strategy of the ceding company, the same situation leads to a square matrix of smaller dimension since the insurance company want to get an offer for reinsurance only in the most risky lines of business and covers the risks of the line of business with the lowest risk itself. 
The reinsurance company collects the premium 
\begin{displaymath}
p(Q)= {\bf 1}^T (I-Q) \p=\sum_{j=1}^d (1-q_{j,j})p_j
\end{displaymath}
and covers the amount $(1-q_{j,j})X^j$ of the $j$th line of business. The following theorem states the optimising quota share strategy.

\begin{thm}
Let $\X=R\Thetav$ where $\Thetav$ fulfils assumption \ref{as_thetav} and $\Omega=\Lambda(\Ss^{d-1})$ is defined by the linear transformation $\Lambda(\x)=A\x$, where $A$ is a diagonal $d\times d$ matrix  with $a_{k,k}>0$ for all $k=1,\dots,d$. Additionally, we assume that $R$ has a Weibull distribution with parameter $\beta\in (0,1)$.
Then, the quota share reinsurance defined by the matrix $Q=I- A^{-1}/c$ for some $c>\max_{j=1}^d 1/a_{j,j}$ yields the inequality
\begin{equation} \label{eq_asrel2}
\lim_{n\to\infty} \frac{\log(\PP(\|\Sum_n/n\|_2>a))}{h(n)} > \lim_{n\to\infty} \frac{\log(\PP(\|(I-Q)\Sum_n/n\|_2>a))}{h(n)}.
\end{equation}
\end{thm}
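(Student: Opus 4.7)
The main insight is that choosing $I-Q = A^{-1}/c$ with $A$ diagonal collapses the elliptical increment back to a spherical one: since $\Thetav = \Lambda(\Uv) = A\Uv$ and $A$ commutes with $A^{-1}$,
\[
(I-Q)\X = \frac{1}{c}A^{-1}R\Thetav = \frac{R}{c}\,A^{-1}A\Uv = \frac{R}{c}\Uv.
\]
Hence the reinsured walk has i.i.d.\ spherical increments with radius $R/c$, so the right side of \eqref{eq_asrel2} falls under Theorem \ref{thm_ldp}, while the left side is governed by Theorem \ref{thm_ldpellipse}. Since $R$ is Weibull$(\beta)$, Example \ref{ex_r}(i) gives rate function $I(\y)=\|\y\|_2^\beta$, making both optimizations explicit.

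For the left side, applying Theorem \ref{thm_ldpellipse} to $B = B(\nolla,a)^c$ yields
\[
\lim_{n\to\infty}\frac{\log\PP(\|\Sum_n/n\|_2 > a)}{h(n)} = -\inf_{\|A\y\|_2 \geq a}\|\y\|_2^\beta = -\Bigl(\frac{a}{\max_{1\leq j\leq d} a_{j,j}}\Bigr)^\beta,
\]
where diagonality of $A$ lets one attain the infimum by concentrating $\y$ on the coordinate achieving $\max_j a_{j,j}$. For the right side, $R/c$ also satisfies Assumption \ref{as_R}, and choosing the representative $h(x)=(x/\lambda)^\beta$ gives $-\log\PP(R/c>x)=h(cx)=c^\beta h(x)$. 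Applying Theorem \ref{thm_ldp} to the spherical walk with increment $(R/c)\Uv$ then produces rate function $\|\y\|_2^\beta$ with scale $h'=c^\beta h$, and passing back to the scale $h(n)$ via $h'(n)/h(n)=c^\beta$ gives
\[
\lim_{n\to\infty}\frac{\log\PP(\|(I-Q)\Sum_n/n\|_2 > a)}{h(n)} = -c^\beta a^\beta.
\]

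The claimed strict inequality \eqref{eq_asrel2} thus reduces to $c^\beta a^\beta > (a/\max_j a_{j,j})^\beta$, i.e.\ $c > 1/\max_j a_{j,j}$, which follows from the hypothesis $c > \max_j 1/a_{j,j} = 1/\min_j a_{j,j} \geq 1/\max_j a_{j,j}$. No substantial obstacle arises; the argument is essentially bookkeeping. The most delicate step is the rescaling that replaces $R$ by $R/c$ while keeping the logarithmic scale $h(n)$ on both sides of \eqref{eq_asrel2}: for Weibull this is immediate from $h(cx)=c^\beta h(x)$, but the analogous extension to general regularly varying $h$ of index $\alpha>0$ (mentioned in the remark following Theorem \ref{thm_reins}) would require the uniform convergence available via Remark \ref{rem_h} and Karamata theory.
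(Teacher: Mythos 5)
Your argument is correct and reaches the same conclusion by essentially the same means: both you and the paper invoke the large-deviations machinery with the Weibull rate function $\|\y\|_2^\beta$ and reduce the claim to an elementary comparison of two infima. The one genuine difference is in how you treat the reinsured walk. The paper applies Theorem \ref{thm_ldpellipse} a second time to the process $((I-Q)\Sum_n)$, whose generating matrix is $(I-Q)A = I/c$, yielding directly $-\inf_{\|\y\|_2=a}\|A^{-1}(I-Q)^{-1}\y\|_2^\beta = -(ca)^\beta$. You instead observe the collapse $(I-Q)\X = (R/c)\Uv$ to a spherical increment, re-derive an LDP at the rescaled scale $h' = c^\beta h$ via Theorem \ref{thm_ldp}, and convert back by $h'(n)/h(n) \to c^\beta$; this is correct but a longer route to the same number, and the rescaling step depends on the Weibull-specific identity $h(cx)=c^\beta h(x)$ (which, as you note, would require Karamata uniform convergence for general regularly varying $h$). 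You also compute the left-hand infimum exactly as $(a/\max_j a_{j,j})^\beta$, where the paper only needs the cruder upper bound $\max_j a/a_{j,j}$; your sharper value shows the strict inequality would already follow from $c > 1/\max_j a_{j,j}$, with the stronger hypothesis $c > 1/\min_j a_{j,j}$ serving instead to keep $Q$ a valid quota share matrix.
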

\begin{proof}
The large deviations principle implies
\begin{displaymath}
\lim_{n\to\infty} \frac{\log(\PP(\|\Sum_n/n\|_2>a))}{h(n)} =-\inf_{\y\in B(\nolla, a)^c} \|A^{-1}\y\|_2^\beta=-\inf_{\|\y\|_2=a} \|A^{-1}\y\|_2^\beta
\end{displaymath}
and equivalently 
\begin{displaymath}
\lim_{n\to\infty} \frac{\log(\PP(\|(I-Q)\Sum_n/n\|_2>a))}{h(n)} =-\inf_{\|\y\|_2=a} \|A^{-1}(I-Q)^{-1}\y\|_2^\beta.
\end{displaymath}
For the $j$th unit vector $\e_j$ it holds $\|A^{-1}(a\e_j)\|_2=a/a_{j,j}$ which results in
\begin{eqnarray*}
\inf_{\|\y\|_2=a}\|A^{-1}\y\|_2 &\leq& \min\left(\frac{a}{a_{1,1}}, \dots, \frac{a}{a_{d,d}}\right)\leq \max\left(\frac{a}{a_{1,1}}, \dots, \frac{a}{a_{d,d}}\right)\\
&<& ca=\inf_{\|\y\|_2=a}\|A^{-1}(I-Q)^{-1}\y\|_2.
\end{eqnarray*}
This proves Inequality (\ref{eq_asrel2}).
\end{proof}

As in Theorem \ref{thm_reins}, $A$ is a diagonal matrix which implies that the ellipsoid that defines the distribution of $\X$ is orientated along the axes. The constant $c$ defines the risk share of the reinsurance company and hence the amount that they reassure. The quota share ratio of the reinsurer of the $j$th line of business is $1-q_{j,j}=1/(a_{j,j}c)$ so increasing $c$ reduces the risks for the reinsurer. As in Subsection \ref{sec_qsins} also the optimal risk sharing strategy of the reinsurance company includes bigger ratio for the lines of business with smaller risks.

The ceding company as well as the reinsurance company optimise their risks by taking or offering reinsurance that maps their share of the initial ellipsoid to a ball. The optimising strategy of the ceding company yields 
\begin{displaymath}
AQ=\min_{i=1}^d a_{i,i} I
\end{displaymath}
whereas the optimising strategy of the reinsurance company results in
\begin{displaymath}
A(I-\tilde{Q})=\frac{1}{c}I,
\end{displaymath}
where $c>\max_{i=1}^d 1/a_{i,i}$ or $1/c<\min_{i=1}^d a_{i,i}$. 
The radius of the ball defined by $AQ$ is $\sqrt{1/\min_{i=1}^d a_{i,i}}$ and therefore smaller than the radius of the ball defined by $A(I-\tilde{Q})$ which is $\sqrt{c}$. Increasing $c$ increases the radius of the ball generated by $A(I-\tilde{Q})$. 
In general, a bigger radius implies smaller risks for the insurance or reinsurance company. Increasing $c$ reduces the share of the reinsurance company and therefore also the risks of the reinsurance company. 
The minimum $\min_{i=1}^d a_{i,i}$ indicates the line of business with the lowest risk.

\section{Conclusions}

The assumptions of the main result require that the support of the random vectors is, asymptotically, the whole space. In particular, the components of increments are asymptotically dependent. However, the studied model admits more flexibility than many typical models in the sense that it does not require uniformly distributed random vectors on ellipses. This makes it possible to derive asymptotics for a wide class of zero-mean random walks. The general case with the non-zero expectation can be studied by centring the increments.

The derived large deviations principle quantifies, asymptotically, the probabilities of rare events of such random walks, which enables further results in applications such as the presented QS optimisation method. For further research, it would be natural to ask how the set $\Omega$ can be deduced from observed data and if the large deviations principle holds even if $\Omega$ is, for example, any star shaped set.

\bibliographystyle{abbrv}
\bibliography{kirjallisuus} 

\begin{thebibliography}{10}

\bibitem{Bazhba1}
M.~Bazhba, J.~Blanchet, C.-H. Rhee, B.~Zwart, et~al.
\newblock Sample path large deviations for l{\'e}vy processes and random walks
  with weibull increments.
\newblock {\em Annals of Applied Probability}, 30(6):2695--2739, 2020.

\bibitem{Bingham}
N.~Bingham, C.~Goldie, and J.~Teugels.
\newblock {\em Regular Variation}.
\newblock Cambridge University Press, 1989.

\bibitem{Cline1}
D.~B.~H. Cline and S.~I. Resnick.
\newblock Multivariate subexponential distributions.
\newblock {\em Stochastic Process. Appl.}, 42(1):49--72, 1992.

\bibitem{Dembo1}
A.~Dembo and O.~Zeitouni.
\newblock {\em Large deviations techniques and applications}.
\newblock Jones and Bartlett Publishers, Boston, MA, 1993.

\bibitem{Denisov1}
D.~Denisov, A.~B. Dieker, and V.~Shneer.
\newblock Large deviations for random walks under subexponentiality: the
  big-jump domain.
\newblock {\em Ann. Probab.}, 36(5):1946--1991, 2008.

\bibitem{Embrechts1}
P.~Embrechts, C.~Kl\"uppelberg, and T.~Mikosch.
\newblock {\em Modelling extremal events}, volume~33 of {\em Applications of
  Mathematics (New York)}.
\newblock Springer-Verlag, Berlin, 1997.
\newblock For insurance and finance.

\bibitem{Foss1}
S.~Foss, D.~Korshunov, and S.~Zachary.
\newblock {\em An introduction to heavy-tailed and subexponential
  distributions}.
\newblock Springer Series in Operations Research and Financial Engineering.
  Springer, New York, 2011.

\bibitem{Gantert2}
N.~Gantert.
\newblock Functional {E}rdős-{R}enyi laws for semiexponential random
  variables.
\newblock {\em Ann. Probab.}, 26(3):1356--1369, 07 1998.

\bibitem{Gantert1}
N.~Gantert, K.~Ramanan, and F.~Rembart.
\newblock Large deviations for weighted sums of stretched exponential random
  variables.
\newblock {\em Electron. Commun. Probab.}, 19(41):1--14, 2014.

\bibitem{Hagele1}
M.~H\"{a}gele.
\newblock Precise asymptotics of ruin probabilities for a class of multivariate
  heavy-tailed distributions.
\newblock {\em Statist. Probab. Lett.}, 166:108871, 8, 2020.

\bibitem{hardy2001regime}
M.~R. Hardy.
\newblock A regime-switching model of long-term stock returns.
\newblock {\em North American Actuarial Journal}, 5(2):41--53, 2001.

\bibitem{Hult5}
H.~Hult and F.~Lindskog.
\newblock Multivariate extremes, aggregation and dependence in elliptical
  distributions.
\newblock {\em Advances in Applied Probability}, 34(3):587--608, 2002.

\bibitem{Hult3}
H.~Hult and F.~Lindskog.
\newblock On regular variation for infinitely divisible random vectors and
  additive processes.
\newblock {\em Adv. in Appl. Probab.}, 38(1):134--148, 2006.

\bibitem{Hult1}
H.~Hult, F.~Lindskog, T.~Mikosch, and G.~Samorodnitsky.
\newblock Functional large deviations for multivariate regularly varying random
  walks.
\newblock {\em Ann. Appl. Probab.}, 15(4):2651--2680, 2005.

\bibitem{jrfm11030052}
M.~J. Jensen and J.~M. Maheu.
\newblock Risk, return and volatility feedback: A bayesian nonparametric
  analysis.
\newblock {\em Journal of Risk and Financial Management}, 11(3), 2018.

\bibitem{Kluppelberg2}
C.~Kl\"{u}ppelberg, G.~Kuhn, and L.~Peng.
\newblock Estimating the tail dependence function of an elliptical
  distribution.
\newblock {\em Bernoulli}, 13(1):229--251, 2007.

\bibitem{Lehtomaa2}
J.~Lehtomaa.
\newblock Large deviations of means of heavy-tailed random variables with
  finite moments of all orders.
\newblock {\em J. Appl. Probab.}, 54(1):66--81, 2017.

\bibitem{Lehtomaa3}
J.~Lehtomaa and S.~I. Resnick.
\newblock Asymptotic independence and support detection techniques for
  heavy-tailed multivariate data.
\newblock {\em Insurance Math. Econom.}, 93:262--277, 2020.

\bibitem{Mikosch3}
T.~Mikosch and I.~Rodionov.
\newblock Precise large deviations for dependent subexponential variables,
  2020.

\bibitem{Mikosch1}
T.~Mikosch and O.~Wintenberger.
\newblock A large deviations approach to limit theory for heavy-tailed time
  series.
\newblock {\em Probab. Theory Related Fields}, 166(1-2):233--269, 2016.

\bibitem{Nyrhinen1}
H.~Nyrhinen.
\newblock On large deviations of multivariate heavy-tailed random walks.
\newblock {\em J. Theoret. Probab.}, 22(1):1--17, 2009.

\bibitem{Omey1}
E.~Omey.
\newblock Subexponential distribution functions in {${\bf R}^d$}.
\newblock {\em J. Math. Sci. (N.Y.)}, 138(1):5434--5449, 2006.

\bibitem{Samorodnitsky}
G.~Samorodnitsky and J.~Sun.
\newblock Multivariate subexponential distributions and their applications.
\newblock {\em Extremes}, 19(2):171--196, 2016.

\bibitem{Tegner}
M.~Tegnér and R.~Poulsen.
\newblock Volatility is log-normal—but not for the reason you think.
\newblock {\em Risks}, 6(2), 2018.

\bibitem{Teugels}
J.~L. Teugels.
\newblock The class of subexponential distributions.
\newblock {\em Ann. Probability}, 3(6):1000--1011, 1975.

\end{thebibliography}

\end{document}